\newtheorem{theorem}{Theorem}
\newtheorem{lemma}[theorem]{Lemma}
\newtheorem{problem}[theorem]{Problem}
\def\adots{\mathinner{\mkern2mu\raise0pt\hbox{.}  
\mkern2mu\raise4pt\hbox{.}\mkern1mu
\raise7pt\vbox{\kern7pt\hbox{.}}\mkern1mu}}
\newcommand{\A}{\mathcal{A}}
\newcommand{\V}{\mathcal{V}}
\def\tu{{\rm Tur{\'a}n\,}}
\date{ }
\begin{document}
\title{ 0-1 matrices with zero trace whose squares are 0-1 matrices}
\author{ Zejun Huang,\thanks{Institute of Mathmatics, Hunan University, Changsha  410082, P.R. China.  (mathzejun@gmail.com) } ~~Zhenhua Lyu,\thanks{College of Mathematics and Econometrics, Hunan University, Changsha  410082, P.R. China.  (lyuzhh@outlook.com)}  } \maketitle

\begin{abstract}
In this paper, we determine the maximum number of nonzero entries in 0-1 matrices of order $n$ with zero trace whose squares are 0-1 matrices when  $n\ge 8$. The extremal matrices attaining this maximum number are also characterized.
\end{abstract}

{\bf Key words:}
0-1 matrix, digraph, \tu problem, walk

{\bf AMS  subject classifications:} 15A36, 05C35
\section{Introduction}

Denote by $M_{n}\{0,1\}$ the set of 0-1 matrices of order $n$.  In 2007, Zhan \cite{ZH} proposed the following problem.
\begin{problem}\label{p1}
Given two integers $n$ and $k$, what is the maximum number of nonzero entries in a matrix $A\in M_n\{0,1\}$ such that $A^k\in M_n\{0,1\}$, and what are the extremal matrices attaining this maximum number?
\end{problem}
Wu \cite{WU} solved the case  $k=2$. Huang and Zhan \cite{HZ1} solved the case $k\ge n-1$ and they attained the maximum number for the case $k=n-2$ and $n-3$.  The authors of \cite{HLQ} solved the case $k\ge 5$ and they attained the maximum number for the case $k=4$. In this paper, we consider the following related problem.
\begin{problem}\label{p2}
	Given   integers $n$ and $k$, determine the maximum number of nonzero entries in a matrix $A \in M_n\{0,1\}$  such that $$tr(A)=0 {\rm ~and~}  A^k\in M_{n}\{0,1\}.$$ Characterize the extremal matrices that attain this maximum number.
\end{problem}
We solve the case $k=2$ for Problem \ref{p2} in this paper. Our  approach  is transferring the problem to an equivalent problem on digraphs and applying detail analysis on the structures of certain digraphs.  We need the following definitions and notations.

 We abbreviate directed
	walks, directed paths and directed cycles as walks, paths and cycles, respectively. The {\it length} of a walk, path or cycle is its number of arcs. The number of vertices in a digraph is called its {\it order} and the number of arcs its {\it size}. Let $u,w$ be two vertices. The notation $(u,w)$ or $u\rightarrow w$ means there exists an arc from $u$ to $w$;  $u\nrightarrow w$ means there exists no arc from $u$ to $w$;  $u\leftrightarrow w$ means both $u \rightarrow w$ and $w\rightarrow u$. If there exists an arc from $u$ to $w$, then $u$ is a {\it predecessor} of $w$, and $w$ is a {\it successor} of $u$.

 Let $D=(\mathcal{V},\mathcal{A})$ be a digraph with vertex set $\mathcal{V}=\{v_1,v_2,\ldots,v_n\}$ and arc set $\mathcal{A}$. Its {\it adjacency matrix} $A_D=(a_{ij})$ is defined by
\begin{equation}\label{eqh1}
a_{ij}=\left\{\begin{array}{ll}
                 1,& (v_i,v_j)\in \mathcal{A};\\
                 0,&\textrm{otherwise}.\end{array}\right.
                 \end{equation}
Conversely, given an $n\times n$ 0-1 matrix $A=(a_{ij})$, we can define its digraph $D(A)=(\mathcal{V},\mathcal{A})$ on vertices $v_1,v_2,\ldots,v_n$ by (\ref{eqh1}), whose adjacency matrix is $A$.

For a subset $X\subset \mathcal{V}$, $D(X)$ denotes the subdigraph of $D$ induced by $X$. The outdegree $d^+(u)$ is the number of arcs with tail $u$ and the indegree $d^-(u)$ is the number of arcs with head $u$. For convenience, if $X=\{x\}$ is a singleton, it will be abbreviated as $x$.

 For $S,T\subset \mathcal{V}$, denote by $\A(S,T)$ the set of arcs from $S$ to $T$. If $S=T$ we use $\A(S)$ instead of $\A(S,S)$. Let $e(S,T)=|\A(S,T)|$ and $e(D)=|\A(\V)|$. $S\rightarrow T$ means for every vertex $i\in T$ there exists exactly one vertex $j\in S$ such that $j\rightarrow i$ ; $S\nrightarrow T$ means there is no arc from $S$ to $T$. If every vertex in $S$ has exactly one successor in $T$ and each vertex in $T$ has exactly one predecessor in $S$, we say $S$ {\it matches} $T$. Note that $S$ matching $T$ indicates $|S|=|T|$.

 For $W\subset \mathcal{V}$, denote by
 $$N^+_{W}(u)=\{x\in W|(u,x)\in \mathcal{A}\},$$
  $$N^-_{W}(u)=\{x\in W|(x,u)\in \mathcal{A}\},$$
  and $$N^+_{W}(S)=\bigcup\limits_{u\in S}N^+_{W}(u),$$ where $S\subset \V$. When $W=\mathcal{V}$, we simply write $N^+(u)$, $N^-(u)$ and $N^+(S)$ respectively.

The following problem is equivalent with Problem {\ref{p1} and Problem {\ref{p2}.
\begin{problem}\label{p3}
  Given two integers $n$ and $k$, determine the maximum size of  digraphs of order $n$ avoiding two distinct directed walks of a given length $k$ with the same initial and terminal vertices. Characterize the extremal digraphs attaining this maximum size.
\end{problem}
\noindent Using the digraphs of 0-1 matrices, we see that for a  matrix $A\in M_n\{0,1 \}$, $A^k\in M_n\{0,1\}$ if and only if $D(A)$ avoids distinct directed walks of length $k$ with the same initial vertex and the same terminal vertex.   Hence, for digraphs allow loops but donot allow parallel arcs,  Problem \ref{p3}  is equivalent with Problem \ref{p1}; for strict digraphs, i.e., digraphs do not allow loops or parallel arcs, Problem \ref{p3} is equivalent with Problem \ref{p2}.

For strict digraphs, the solution to Problem \ref{p3} for the case $k\ge 5$ follows straightforward from \cite{HLQ,HZ1},  since the extremal digraphs in \cite{HLQ,HZ1} are loopless.
In this paper we consider the case $k=2$ for Problem \ref{p3} on strict digraphs.

In what follows digraphs are strict. Given a family of digraphs $\mathscr{H}$ and a digraph $D$, $D$ is $\mathscr{H}$-free if $D$ contains no member of $\mathscr{H}$ as its subgraph. Let $\mathscr{F}=\{P_{2,2},C_{2,2}\}$ where $P_{2,2}$ and $C_{2,2}$ are defined as follows.

 \begin{figure}[H]
        \centering
        \includegraphics[width=1.1in]{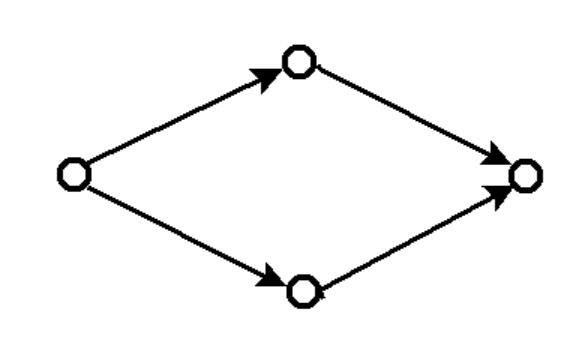}\hspace{0.8cm}
        \includegraphics[width=1.5in]{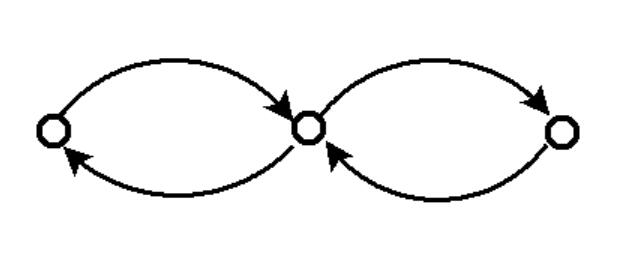}\hspace{0.8cm}\\

     $P_{2,2}$\hspace{4.1cm}$C_{2,2}$
 \end{figure} It is obvious that a digraph avoids two distinct 2-walks with the same initial and terminal vertices if and only if it is $\mathscr{F}$-free. Denote by $ex(n)$ and $EX(n)$ the maximum size of $\mathscr{F}$-free digraphs of order $n$ and the set of $\mathscr{F}$-free digraphs of order $n$ attaining the maximum size, respectively.

Note that Problem \ref{p3} is a \tu type problem, which concerns the study of  extremal graphs  that avoid given subgraphs. \tu problem is a hot topic in graph theory with a long history; see \cite{BB,BH,BS,MS,PT,PT2}.   In \cite{HL}  the authors studied a closely related \tu problem. They determined the maximum size of $P_{2,2} $-free digraphs as well as  the extremal $P_{2,2} $-free digraphs.
 \section{Main results}

If a digraph $D$ is acyclic and there is a vertex $u$ such that there is a unique directed path from $u$ to any other vertex, then we say $D$ is  an {\it arborescence} with root $u$.  If the maximum length of these paths is at most $r$, then we say $D$ is  an {\it $r$-arborescence}. Moreover, if $D$ is a 1-arborescence, we also say $D$ is an {\it out-star}. Throughout this article, we assume each arborescence has more than one vertices.
	
We will use $S(x)$, $S_{x}(y)$, $T(x)$ and $T(x,y)$ to denote the following digraphs, whose orders will be clear from the context. Note that $S(x)$ is an out-star with root $x$;
$S_{x}(y)$ is the union of a 2-cycle $x\leftrightarrow y$ and an out-star with root $y$; $T(x)$ is a  2-arborescence with root $x$; $T(x,y)$ is the union of a  2-cycle $x\leftrightarrow y$ and two 2-arborescences with roots $x$ and $y$.
\begin{figure}[H]
        \centering
        \includegraphics[width=.7in]{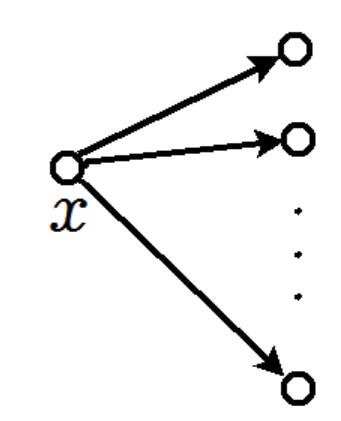}\hspace{0.8cm}
        \includegraphics[width=1in]{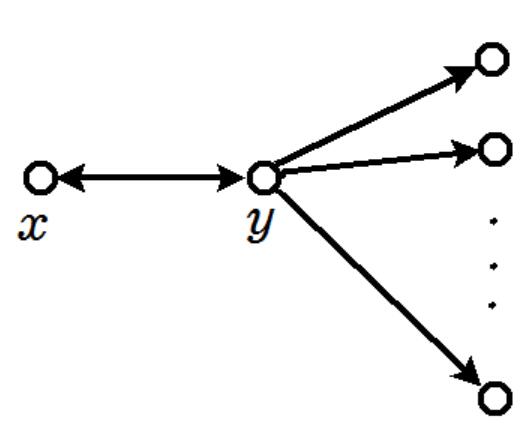}\hspace{0.8cm}
        \includegraphics[width=1in]{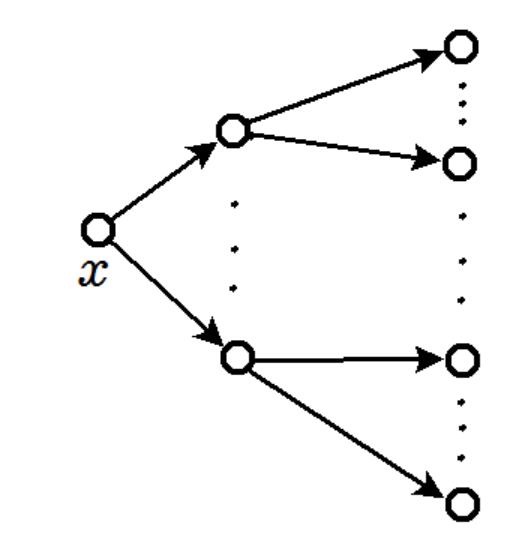}\hspace{0.8cm}
        \includegraphics[width=2in]{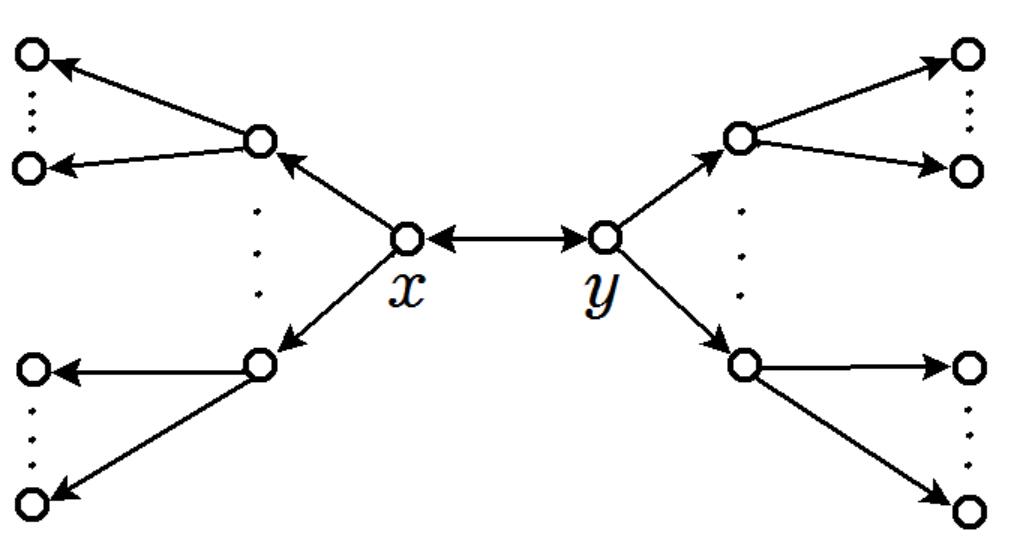}\hspace{0.8cm}\\

       $S(x)$\hspace{1.5cm}$S_{x}(y)$\hspace{3.3cm}$T(x)$\hspace{3.4cm}$T(x,y)$
 \end{figure}

Here we denote a single vertex by $C_1$. The digraph obtained by taking the union of digraphs $D$ and $H$ with disjoint vertex sets is the disjoint union, written $D+H$. In general, $mD$ is the digraph consisting of $m$ pairwise disjoint copies of $D$. Denote by $D'$ the reverse of $D$, which is obtained by reversing the directions of all arcs of $D$. The reverse of an out-star is called an {\it in-star}.

Now we present the following six classes of digraphs on $n$ vertices, where $n$ is even for $D_1$ and odd for the others. Each of these diagraphs has vertex partition $\V_1\cup \V_2$ with $|\V_1|=\lfloor\frac{n}{2}\rfloor+1$ and $|\V_2|=\lceil\frac{n}{2}\rceil -1$.

\begin{figure}[H]
		\centering
		\includegraphics[width=1.6in]{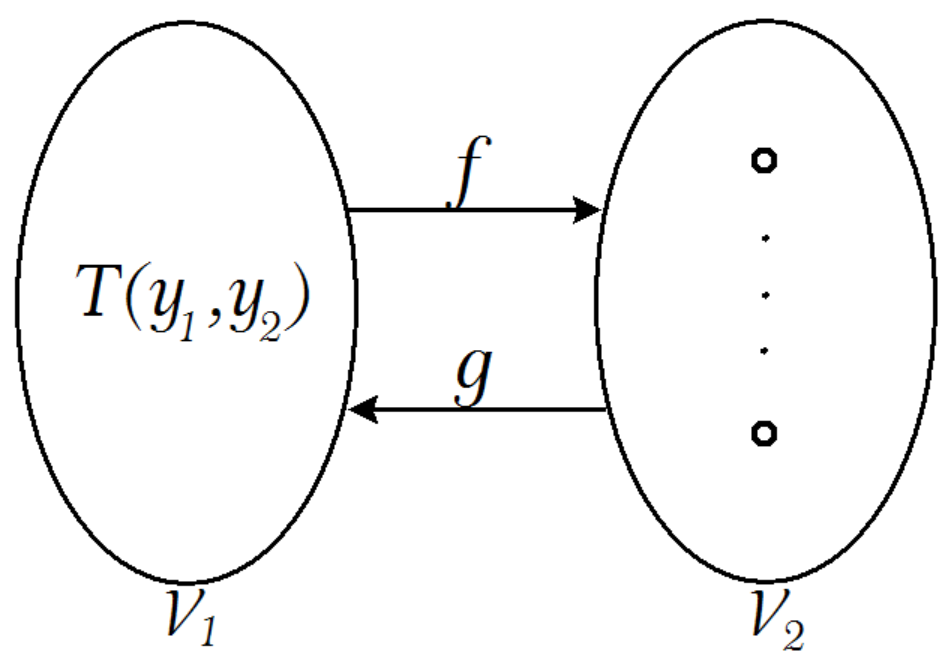}\hspace{0.1cm}
		\includegraphics[width=1.6in]{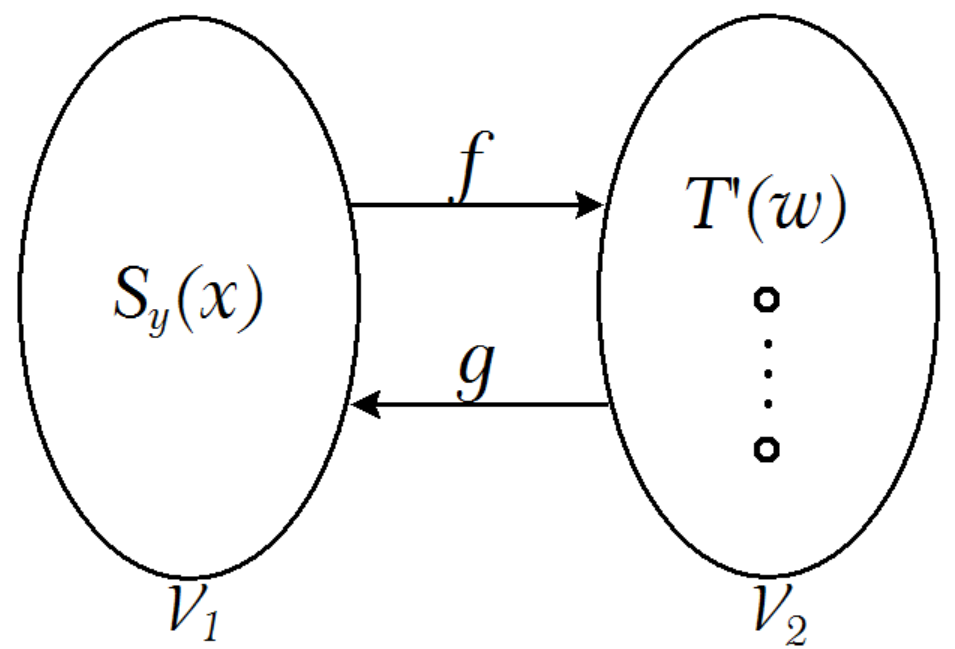}\hspace{0.1cm}
			\includegraphics[width=1.6in]{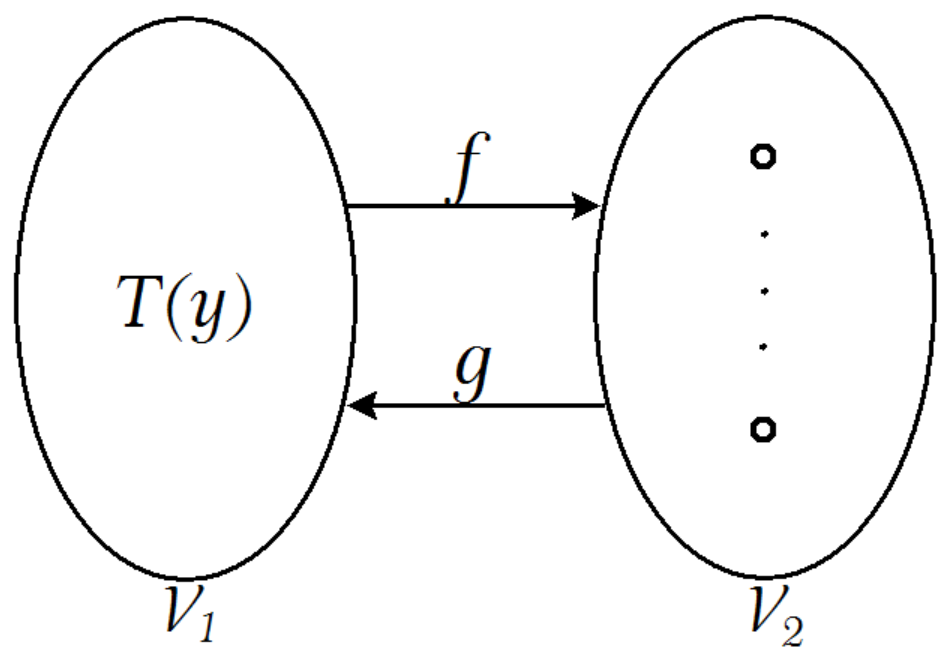} \\
		$D_1$\hspace{4cm}$D_2$\hspace{4cm}$D_3$\\
		\vspace{0.3cm}
		\includegraphics[width=1.6in]{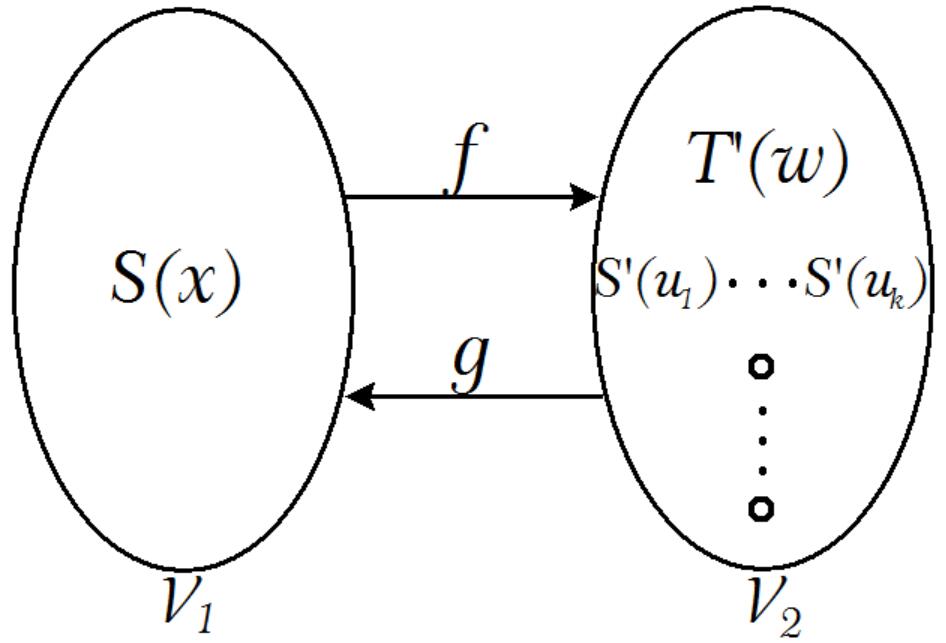}\hspace{0.1cm}
		\includegraphics[width=1.6in]{di1.png}\hspace{0.1cm}
		\includegraphics[width=1.6in]{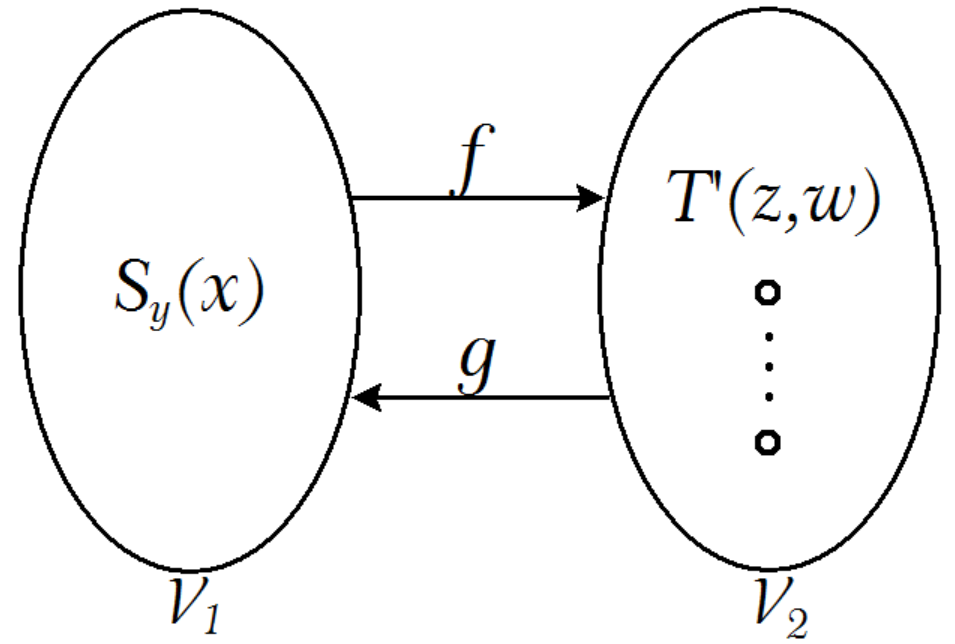} \\
		$D_4$\hspace{4cm}$D_5$\hspace{4cm}$D_6$\\
	
\end{figure}

	In $D_1$,
	$D(\V_1)=T(y_1,y_2)$;
	$D(\V_2)$ is empty;
	$f$ means $$\V_1\backslash \{y_1,y_2\}~ {\rm matches} ~\V_2;$$
	$g$ means $$u\rightarrow \V_1{\rm ~for~ all~} u\in \V_2.$$
	
	In $D_2$,
	$D(\V_1)=S_y(x)$; $D(\V_2)$ is the disjoint union of $T'(w)$ and isolated vertices, where $T'(w)$ may vanish. $\V_2$ is partitioned as $\V_2=\V_3\cup \V_4$, where $\V_3$ consists of all the isolated vertices of $\V_2$ and $|\V_3|\ge 1$;  $f$ means
	$$\V_1\backslash \{y,x\}{\rm ~matches~}\V_2\backslash \{w\} ~{\rm and~}x\rightarrow w;$$ $g$ means
	$$u\rightarrow \V_1 {\rm ~for~ all~} u\in \V_3 ~{\rm and}~ u\rightarrow \V_1\backslash  \{x\}{\rm ~for~ all~} u\in\V_4.$$
	
	In $D_3$, $D(\V_1)=T(y)$; $D(\V_2)$ is empty;  $f$ means $$\V_1\backslash \{y\}~ {\rm matches} ~\V_2;$$
	$g$ means $$u\rightarrow \V_1{\rm ~for~ all~} u\in \V_2.$$

	In $D_4$, $D(\V_1)=S(x)$;  $D(\V_2)$ is the disjoint union of $T'(w)$, in-stars and isolated vertices, and each of them may vanish. $\V_2$ is partitioned as $\V_2=\V_3\cup \V_4$, where $\V_3$ consists of all isolated vertices of $\V_2$ and the roots of $T'(w)$ and the in-stars; $f$ means
	$$\V_1\backslash \{x,y\}~ {\rm matches} ~\V_2\backslash \{w\} ~{\rm and ~}x\rightarrow w,$$  where $y$ is an arbitrary vertex in $\V_1\backslash \{x\}$; $g$ means
	$$u\rightarrow \V_1 {\rm ~for~ all~} u\in \V_3 ~{\rm and}~ u\rightarrow \V_1\backslash  \{x\}{\rm ~for~ all~} u\in\V_4.$$

	In $D_5$,  $D(\V_1)=T(y_1,y_2)$;
	$D(\V_2)$ is empty;
	$f$ means $$\V_1\backslash \{y_1,y_2\}{\rm ~matches~} \V_2\backslash \{x\},$$ where $x$ is an arbitrary vertex in $\V_2$;
	$g$ means $$u\rightarrow \V_1{\rm ~for~ all~} u\in \V_2.$$

	In $D_6$,  $D(\V_1)=S_y(x)$;  $D(\V_2)$ is the disjoint union of $T'(w,z)$ and isolated vertices. $\V_2$ is partitioned as $\V_2=\V_3\cup \V_4$, where $\V_3$ consists of all the isolated vertices of $\V_2$ and $|\V_3|\ge 1$;
	$f$ means
	$$\V_1\backslash \{y,y',x\}{\rm ~matches~} \V_2\backslash \{z,w\}~{\rm and}~x\rightarrow w,$$where $y'$ is an arbitrary vertex in $\V_1\backslash \{y,x\}$;  $g$ means $$u\rightarrow \V_1 {\rm ~for~ all~} u\in \V_3 ~{\rm and}~ u\rightarrow \V_1\backslash  \{x\}{\rm ~for~ all~} u\in\V_4.$$

We say that digraph $D $ is an isomorphism of $ H$ if there exists a bijection $f$: $\V(D)\rightarrow\V(H)$ such that $(u,v)\in \A(D)$ if and only if $(f(u),f(v))\in \A(H)$. Now we post our main result as follows.

\begin{theorem}\label{th1}

Let D be a digraph on $n$ vertices with $n\ge 8$. Then $$ex(n)=\lfloor\frac{n^2+4n}{4}\rfloor-1.$$ Moreover, $D\in EX(n)$ if and only if
\begin{itemize}
			\item[(1)] $n$ is even, and $D$ or $D'$ is an isomorphism of $D_1$;
			\item[(2)] $n$ is odd, and $D$ or $D'$ is an isomorphism of $D_i$ with $i\in \{2,\ldots,6\}$.
\end{itemize}
\end{theorem}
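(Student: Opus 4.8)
The plan is to convert $\mathscr{F}$-freeness into two local degree inequalities, read off the value of $ex(n)$ from a single vertex of maximum out-degree, and then invest the real work in the equality and near-equality analysis that yields $EX(n)$. Since $D$ avoids two distinct $2$-walks sharing both endpoints, for every vertex $v$ the $2$-walks leaving $v$ reach pairwise distinct terminal vertices; as these walks are exactly $v\to w\to z$ with $w\in N^+(v)$ and $z\in N^+(w)$, the sets $\{N^+(w):w\in N^+(v)\}$ are pairwise disjoint, so
\begin{equation}\label{eq:loc}
\sum_{w\in N^+(v)} d^+(w)\le n \qquad\text{for all } v\in\V .
\end{equation}
Because the reverse $D'$ is again $\mathscr{F}$-free, the dual inequality $\sum_{w\in N^-(v)}d^-(w)\le n$ holds for every $v$ as well. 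These two inequalities, together with the loopless hypothesis, are the only structural inputs I would use.

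First I would establish the upper bound. Pick $v^*$ with $d^+(v^*)=\Delta^+:=\max_v d^+(v)$ and set $A=N^+(v^*)$, $B=\V\setminus A$, so $|A|=\Delta^+$ and $|B|=n-\Delta^+$. Splitting the arc count according to the tail gives
\begin{equation*}
e(D)=\sum_{w\in\V} d^+(w)\le n+(n-\Delta^+)\,\Delta^+\le n+\Big\lfloor \tfrac{n^2}{4}\Big\rfloor=\Big\lfloor \tfrac{n^2+4n}{4}\Big\rfloor,
\end{equation*}
where the first inequality applies \eqref{eq:loc} to $v^*$ to control the arcs out of $A$ and bounds each of the $n-\Delta^+$ remaining out-degrees by $\Delta^+$. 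To sharpen this to $ex(n)=\lfloor\frac{n^2+4n}{4}\rfloor-1$ I would rule out equality. Equality forces a rigid configuration: $\Delta^+$ is balanced, $\{N^+(w):w\in A\}$ partitions $\V$, and every vertex of $B$ has out-degree $\Delta^+$. Applying \eqref{eq:loc} to a second vertex $u\in B$, while remembering that the high-out-degree vertices all lie in $B$, forces $N^+(u)$ to lie almost entirely inside $A$, essentially repeating $N^+(v^*)$; this pushes the in-degrees inside $A$ up to about $\tfrac n2+1$, and feeding $\Delta^-\ge\tfrac n2+1$ into the dual bound gives $e(D)\le n+\big(\tfrac n2-1\big)\big(\tfrac n2+1\big)=\tfrac{n^2}{4}+n-1$, a contradiction. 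Hence $ex(n)\le\lfloor\frac{n^2+4n}{4}\rfloor-1$.

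For the matching lower bound I would check the six families directly. In each $D_i$ the bundle $g$ from $\V_2$ to $\V_1$ supplies the quadratic term, while $f$ and the prescribed subdigraphs on $\V_1$ and $\V_2$ (built from the $S$-, $S_y$- and $T$-type pieces and their reverses) supply a linear number of extra arcs; a short count gives exactly $\lfloor\frac{n^2+4n}{4}\rfloor-1$ arcs in every case. $\mathscr{F}$-freeness is the point requiring care, and I would verify it by separating $2$-walks according to the parts they meet: walks internal to $\V_1$ are controlled by the arborescence structure, walks $\V_2\to\V_1$ admit a continuation only through $f$, and the effect of deleting the single vertex $x$ from some out-neighborhoods under $g$ in $D_2,D_4,D_6$ is precisely to break the last remaining coincidences of $2$-walks.

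The step I expect to be the main obstacle is the characterization of $EX(n)$, namely upgrading the equality analysis to the case of a deficit of exactly one. Writing $e(D)=\lfloor\frac{n^2+4n}{4}\rfloor-1$ means the inequality chain above loses exactly one unit, and I would classify where that unit can be lost: in the partition of $\V$ by $\{N^+(w):w\in A\}$, in the out-degree of a single vertex of $B$, or in the parity slack $(n-\Delta^+)\Delta^+<\lfloor n^2/4\rfloor$ when $n$ is odd. Each location, analysed through \eqref{eq:loc}, its dual, and the loopless hypothesis, rigidly determines $D(\V_1)$, $D(\V_2)$, $f$ and $g$; the even/odd dichotomy together with which vertex absorbs the deficit is exactly what separates the single family $D_1$ for even $n$ from $D_2,\dots,D_6$ for odd $n$, each taken up to reversal since $D'$ is $\mathscr{F}$-free. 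The delicate part is running the two dual constraints simultaneously without losing control, and showing that no sporadic extremal digraph survives; the hypothesis $n\ge 8$ is what finally excludes those small exceptional configurations.
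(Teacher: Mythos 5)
Your counting framework is the same as the paper's: the local inequality $\sum_{w\in N^+(v)}d^+(w)\le n$ is Lemma \ref{le2}, the split $e(D)\le n+(n-\Delta^+)\Delta^+$ at a vertex of maximum out-degree is exactly inequality (\ref{equ3.2}), and checking the six families directly is Lemma \ref{le8}. The first genuine gap is in your route to the improved bound $ex(n)\le\lfloor\frac{n^2+4n}{4}\rfloor-1$. Take $n$ even, $\Delta^+=\frac n2$, and all of your equality conditions in force. Applying the local inequality to $u\in B$ yields only $|N^+(u)\cap B|\le 2$, since two out-neighbours of out-degree $\frac n2$ already exhaust the budget $n$; together with $e(A,\V)=n$ this gives $e(\V,B)\le \frac{3n}{2}$, hence $e(\V,A)\ge \frac{n^2}{4}-\frac n2$ and an \emph{average} in-degree on $A$ of about $\frac n2-1$. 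Nothing here forces a vertex of in-degree $\frac n2+1$, and with $\Delta^-=\frac n2$ the dual bound returns $n+\frac{n^2}{4}$, i.e.\ no contradiction. What is actually needed is the stronger statement that every vertex has at most \emph{one} successor outside $N^+(v^*)$ (Lemma \ref{le5}, $\alpha\le1$), which the paper proves by a separate two-stage counting argument; this gives $e(\V,\V_2)\le n-1$, pushes the average in-degree on $\V_1$ strictly above $\frac n2$, and even then the exclusion of $\Delta^+=\frac n2$ requires a further argument about the in-degree of the vertex $u_1'$ (Lemma \ref{le7}). For odd $n$ your third "location of the lost unit" is vacuous: $(n-\Delta^+)\Delta^+=\lfloor n^2/4\rfloor$ exactly when $\Delta^+=\frac{n\pm1}{2}$, so there is no parity slack, and the loss of one unit must be derived structurally --- equality forces $N^+(u)=\V_1$ for all $u\in\V_2$ (Lemma \ref{le9}) and a matching $\V_1\setminus\{y\}$ onto $\V_2$, after which a cycle forced inside $D(\V_1)$ produces two $2$-walks via Lemma \ref{le3}. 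None of this appears in your sketch.

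The second gap is the characterization of $EX(n)$, which is most of the theorem's content. You correctly identify it as the main obstacle, but the proposal stops at the assertion that analysing where the single unit is lost "rigidly determines $D(\V_1)$, $D(\V_2)$, $f$ and $g$". That determination is precisely the hard part: the paper needs the structural lemmas \ref{le3}, \ref{le5}, \ref{le6}, \ref{le9} plus an auxiliary claim, and then an eight-plus-subcase analysis (three cases for which of $e(\V_1,\V_1)$, $e(\V_1,\V_2)$, $e(\V_2,\V)$ falls short, each subdivided according to the arcs present in $D(\V_2)$) to show that exactly $D_1$ (for $n$ even) and $D_2,\dots,D_6$ (for $n$ odd), up to reversal, survive, with several candidate configurations eliminated only by exhibiting a forbidden pair of $2$-walks deep in the case analysis. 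As written, the proposal establishes neither the exact value of $ex(n)$ nor the list of extremal digraphs; it is a correct plan for the easy half of the argument and a placeholder for the hard half.
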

Equivalently, we have the following solution to Problem \ref{p2} when $k=2$.

\noindent {\bf Theorem \ref{th1}*}.
{\it Let $n\ge 8$ be an integer. Suppose $A\in M_n\{0,1\}$ such that
	$$tr(A)=0~{\rm and ~} A^2\in M_n\{0,1\}.$$ Then $A$ has  at most $ex(n)$ nonzero entries, and  $A$ has $ex(n)$  nonzero entries if and only if $D(A)$ or its reverse is an isomorphism of $D_i$ with $i\in \{1,\ldots,6\}$.}

  \section{Lemmas}
In this section we give preparatory lemmas for the proof of the main theorem.
\begin{lemma}\label{le8}
Let $n\ge 3$ be a positive integer. Then $D_i$ is $\mathscr{F}$-free  for $i=1,2,\ldots,6$. Moreover, we obtain
\begin{equation}\label{1.1}
ex(n)\ge
\begin{cases}
\frac{n^2+4n-5}{4},&\ n~is~odd;\\
\frac{n^2+4n-4}{4},&\ n~is~even.
\end{cases}
\end{equation}
\end{lemma}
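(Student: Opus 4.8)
The plan is to establish Lemma~\ref{le8} in two parts: first verifying that each $D_i$ is $\mathscr{F}$-free, and then reading off the size of each $D_i$ to obtain the lower bounds on $ex(n)$.

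\medskip

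\noindent\textbf{Part 1: $\mathscr{F}$-freeness.}
Since $\mathscr{F}=\{P_{2,2},C_{2,2}\}$, a digraph is $\mathscr{F}$-free exactly when it contains no two distinct directed $2$-walks sharing both initial and terminal vertex; equivalently, for every ordered pair $(u,w)$ of vertices (with $u=w$ allowed), there is at most one vertex $v$ with $u\rightarrow v\rightarrow w$. So the plan is to fix an arbitrary $D_i$ and show that any intermediate vertex $v$ on a $2$-walk from $u$ to $w$ is forced to be unique. I would exploit the common vertex-partition structure $\V=\V_1\cup\V_2$ shared by all six families: the arcs $g$ go from $\V_2$ into $\V_1$ (each $u\in\V_2$ points to a full selection $\V_1$ or $\V_1\setminus\{x\}$, i.e.\ one arc into each target vertex, so no two arcs from a single $u$ hit the same head), the arcs $f$ realize a matching-type correspondence between $\V_1$ and $\V_2$, and $D(\V_1)$ is one of the small ``source''-type digraphs $T(y_1,y_2)$, $S_y(x)$, $T(y)$, $S(x)$ while $D(\V_2)$ is a disjoint union of reversed arborescences, in-stars and isolated vertices. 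The key point to check, case by case on where $u$ and $w$ sit, is that each vertex $v$ is reached from $u$ by at most one arc \emph{and} reaches $w$ by at most one arc, so the middle vertex of a $2$-walk is determined by $(u,w)$. Because each component digraph ($S$, $T$, $S_x(y)$, $T(x,y)$, and their reverses) is built precisely so that the relevant out-neighborhoods and in-neighborhoods are single arcs except along the controlled cycle $x\leftrightarrow y$, the uniqueness of $v$ follows; the only place needing care is the $2$-cycle(s), where one must confirm that going $u\rightarrow x\rightarrow y$ and $u\rightarrow y$ (or similar) never produces a \emph{second} $2$-walk to the same endpoint. Carrying this out for all six diagrams is routine but tedious.

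\medskip

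\noindent\textbf{Part 2: the size bounds.}
Once $\mathscr{F}$-freeness is known, $ex(n)\ge e(D_i)$ for each valid $i$, so I would simply count arcs. With $|\V_1|=\lfloor n/2\rfloor+1$ and $|\V_2|=\lceil n/2\rceil-1$, the arcs split as $e(D)=e(\V_1)+e(\V_2)+e(\V_1,\V_2)+e(\V_2,\V_1)$. The dominant contribution is $e(\V_2,\V_1)$ coming from $g$: since every $u\in\V_2$ points into almost all of $\V_1$, this term is close to $|\V_1|\cdot|\V_2|$, which for $n$ odd equals $\frac{n+1}{2}\cdot\frac{n-1}{2}=\frac{n^2-1}{4}$ and for $n$ even equals $(\frac{n}{2}+1)(\frac{n}{2}-1)=\frac{n^2-4}{4}$. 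The arcs $f$ (a matching, essentially $|\V_2|$ arcs) plus the internal arcs of $D(\V_1)$ (about $|\V_1|-1$ for the $T$/$S$ structures, adjusted by the $2$-cycle) plus $e(\V_2)$ contribute the remaining linear-in-$n$ terms. For the even case $D_1$ the plan is to verify the count comes to $\frac{n^2+4n-4}{4}$, and for each odd case $D_2,\dots,D_6$ that it comes to $\frac{n^2+4n-5}{4}$; it is worth noting that these families are designed to all achieve the \emph{same} total, so I expect each odd count to reconcile to the stated value after accounting for the small structural trade-offs (e.g.\ an extra cycle arc compensating for one fewer matching arc).

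\medskip

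\noindent The main obstacle is bookkeeping rather than conceptual: the six diagrams have subtly different internal structures (the allowed vanishing of $T'(w)$, the partition $\V_2=\V_3\cup\V_4$ with the exceptional vertex $x$ excluded from the targets of $\V_4$, the arbitrary choices of $y$, $y'$, $x$), and I must ensure that in every subcase the arc count is exactly constant across the family and that the $\mathscr{F}$-freeness verification genuinely covers the ``mixed'' $2$-walks that start in $\V_2$, pass through $\V_1$, and return to $\V_2$ via $f$. Keeping the matching condition ``exactly one predecessor and one successor'' straight when combined with the near-complete bipartite arcs $g$ is where a careless argument could overlook a second $2$-walk; I would therefore organize Part~1 around the four possible $(\V_i,\V_j)$ locations of the two endpoints of a prospective $2$-walk and check uniqueness of the middle vertex in each.
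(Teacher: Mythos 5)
Your Part 2 is correct and is exactly what the paper does: $e(D_1)=\frac{n^2+4n-4}{4}$ and $e(D_i)=\frac{n^2+4n-5}{4}$ for $i=2,\dots,6$ by direct count, with the dominant term $|\V_1|\,|\V_2|$ computed as you indicate. The gap is in Part 1. Your stated mechanism for uniqueness of the middle vertex --- ``each vertex $v$ is reached from $u$ by at most one arc and reaches $w$ by at most one arc, so the middle vertex of a $2$-walk is determined by $(u,w)$'' --- is a non sequitur: in a strict digraph every ordered pair carries at most one arc automatically, and this says nothing about whether two \emph{different} vertices $v\ne v'$ both satisfy $u\rightarrow v\rightarrow w$ and $u\rightarrow v'\rightarrow w$. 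What must be bounded is $|N^+(u)\cap N^-(w)|$, and since each $u\in\V_2$ has $|\V_1|$ or $|\V_1|-1$ successors under $g$, the relevant out-neighborhoods are emphatically not ``single arcs''; asserting that the components are ``built precisely so that'' uniqueness follows restates the lemma rather than proving it.

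The paper's proof rests on two structural facts your plan never isolates, and they are what make the verification finite: in every $D_i$, (a) each vertex has at most one predecessor \emph{inside} $\V_1$ (the matching $f$ contributes at most one, and within $D(\V_1)$ the arborescence-plus-one-$2$-cycle structure keeps internal in-degrees at most one), and (b) each vertex has at most one successor \emph{inside} $\V_2$ ($f$ is a matching and $D(\V_2)$ is a reversed arborescence plus isolated vertices). Facts (a) and (b) immediately force the two middle vertices of a putative pair of $2$-walks to lie one in $\V_1$ and one in $\V_2$; only after that reduction does the paper split into the four endpoint cases, each settled by a short ad hoc argument (e.g.\ in $D_2$ and $D_6$, only $x$ has successors in both parts, and its unique $\V_2$-successor $w$ fails to close the required configuration). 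Without (a) and (b) your endpoint-indexed case check has no handle on the ``both middle vertices in $\V_1$'' and ``both in $\V_2$'' configurations, and the ``routine but tedious'' verification you defer is where the actual content of the proof lives.
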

\begin{proof}Suppose $D\in \{D_1,D_2,\ldots,D_6\}$ contains $u_1\rightarrow u_2\rightarrow u_4$ and $u_1\rightarrow u_3\rightarrow u_4$ with $u_2\ne u_3$.

For $i\in \{1,\ldots,6\}$, every vertex of $D_i$ has at most one predecessor in $\V_1$. Hence $u_2$ and $u_3$ can not both belong to $\V_1$. On the other hand, since each vertex has at most one successor in $\V_2$, $u_2$ and $u_3$ can not both belong to $\V_2$. Without loss of generality, we suppose $u_2\in \V_1$ and $u_3\in \V_2$. We have the following 4 cases.

{\it Case 1.} $u_1,u_4\in \V_1$. In $D_1,D_3,D_5$, each $2$-walk in $D(\V_1)$ originates at a vertex who has no successors in $\V_2$. In $D_4$, there exists no 2-walks in $D(\V_1)$. In $D_2,D_6$, among the vertices in $\V_1$, only $x$ has successors in both $\V_1$ and $\V_2$. The only 2-walk in $D(\V_1)$ with initial vertex $x$ is $x\rightarrow y\rightarrow x$. But the unique successor of $x$ in $\V_2$, namely $w$, is not a predecessor of $x$.

{\it Case 2.} $u_1\in \V_1$ and $u_4\in \V_2$. In $D_1$, $D_3$ and $D_5$, $D(\V_2)$ contains no arcs, and hence there exists no $u_3\rightarrow u_4$. In $D_2$ and $D_4$, among the vertices of $\V_1$ only $x$ has successors in both $\V_1$ and $\V_2$. We know $x$ has a unique successor in $\V_2$, namely $w$, but $w$ has no successor in $\V_2$. Then there exists no 2-walks from $x$ via $\V_2$ to $ \V_2$. In $D_6$, among the vertices of $\V_1$ only $x$ has successors in both $\V_1$ and $\V_2$. We know $x$ has a unique successor in $\V_2$, say $w$, and $w$ has a unique successor $z\in \V_2$. But there exists no 2-walk from $x$ to $z$ via $\V_1$.

{\it Case 3.} $u_1\in \V_2$ and $u_4\in \V_1$. In $D_1$, $D_3$ and $D_5$, $D(\V_2)$ contains no arcs. Then there exists no $u_1\rightarrow u_3$. In $D_2$, $D_4$ and $D_6$, if there exists $u_1$ has a successor $u_3\in \V_2$, then we have $N^+_{\V_1}(u_1)\nrightarrow N^+_{\V_1}(u_3)$.

{\it Case 4.} $u_1,u_4\in \V_2$. In $D_1$, $D_3$, $D_5$, $D(\V_2)$ contains no 2-walks, then $D$ contains no $u_1\rightarrow u_3\rightarrow u_4$. In $D_2$ and $D_4$, all 2-walks in $D(\V_2)$ end at $w$. For all $u\in \V_2$ with a successor in $\V_2$, we have $N^+_{\V_1}(u)=\V_1\backslash \{x\}$. Combining with $\V_1\backslash \{x\}\nrightarrow w$, there exists no $u_1\rightarrow u_2\rightarrow u_4$. In $D_6$, all 2-walks in $D(\V_2)$ end at $w$ or $z$. For all $u\in \V_2$ with successors in $\V_2$, we have $N^+_{\V_1}(u)=\V_1\backslash \{x\}$. Combining with $\V_1\backslash \{x\}\nrightarrow \{w,z\}$, there exists no $u_1\rightarrow u_2\rightarrow u_4$.

In the above cases, none of $D_1,\ldots,D_6$ contains the required 2-walks $u_1\rightarrow u_2\rightarrow u_4$ and $u_1\rightarrow u_3\rightarrow u_4$, a contradiction. Therefore, $D_1,\ldots,D_6$ are $\mathscr{F}$-free. By direct computation, we obtain $e(D_1)=\frac{n^2+4n-4}{4}$ and $e(D_i)=\frac{n^2+4n-5}{4}$ for $i=2,\dots,6$. Hence, we obtain (\ref{1.1}).

\end{proof}
Let $D=(\V,\A)$ be a digraph. For a fixed vertex $v\in \mathcal{V}$, denote by
\begin{equation*}\V_1(v)=N^+(v) ~{\rm and}~ \V_2(v)=\mathcal{V}\backslash \V_1(v),
 \end{equation*}\begin{equation*}
\V_3(v)=\{u\in \V_2(v)|N^+(u)=\V_1(v)\}~ {\rm and} ~\V_4(v)=\V_2(v)\backslash \V_3(v).
 \end{equation*}
 The index $v$ will be omitted if no confusion from the context.

 The following lemma is obvious.

	\begin{lemma}\label{le2}
			Let $D=(\V,\A)$ be an $\mathscr{F}$-free digraph. Then
		\begin{itemize}
			\item[(i)]   two distinct successors of a vertex share no common successor;
			\item[(ii)]given any $v\in \V$,  $e(\V_1(v),u)\le 1$ for all $u\in \V$.
		\end{itemize}
	\end{lemma}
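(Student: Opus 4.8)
The plan is to unpack the definition of $\mathscr{F}$-freeness and read off both statements as immediate consequences. As noted in the excerpt, a strict digraph is $\mathscr{F}$-free precisely when it contains no two distinct directed 2-walks sharing both their initial and terminal vertex; this is the only property of $\mathscr{F}$ I would invoke. Both parts of the lemma will follow by producing such a forbidden pair of walks from any hypothetical violation.

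For part (i), I would argue by contradiction. Suppose a vertex $a$ has two distinct successors $b,c$ sharing a common successor $d$, so that $a\rightarrow b$, $a\rightarrow c$, $b\rightarrow d$, $c\rightarrow d$ with $b\ne c$. Then $a\rightarrow b\rightarrow d$ and $a\rightarrow c\rightarrow d$ are two distinct 2-walks with the same initial vertex $a$ and the same terminal vertex $d$, contradicting $\mathscr{F}$-freeness. Hence distinct successors of any vertex have no common successor.

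Part (ii) is then merely part (i) specialized to the vertex $v$. Since $\V_1(v)=N^+(v)$, two arcs from $\V_1(v)$ to a fixed $u$ would have distinct tails $w_1,w_2\in N^+(v)$, i.e. two distinct successors of $v$ with the common successor $u$, which part (i) forbids. Therefore $e(\V_1(v),u)\le 1$ for every $u\in\V$, as claimed.

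I expect no real obstacle here; the only point deserving a line of care is that the configuration we exhibit genuinely belongs to $\mathscr{F}$, since $\mathscr{F}$ consists specifically of $P_{2,2}$ and $C_{2,2}$. Because $D$ is strict and hence loopless, the arcs $a\rightarrow b$ and $b\rightarrow d$ force $b\notin\{a,d\}$, and likewise $c\notin\{a,d\}$; combined with $b\ne c$, this yields four distinct vertices when $a\ne d$ (a copy of $P_{2,2}$) and two $2$-cycles meeting only at $a$ when $a=d$ (a copy of $C_{2,2}$). In either case a member of $\mathscr{F}$ appears, which legitimizes the contradiction and accounts for the authors calling the lemma obvious.
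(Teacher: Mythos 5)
Your proof is correct; the paper in fact gives no argument at all (it simply declares the lemma ``obvious''), and your unpacking of $\mathscr{F}$-freeness into the two forbidden $2$-walks is exactly the intended reasoning, with the case check distinguishing $P_{2,2}$ ($a\ne d$) from $C_{2,2}$ ($a=d$) being a welcome extra bit of care.
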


\begin{lemma}\label{le3}
Let $D=(\V,\A)$ be an $\mathscr{F}$-free digraph and $v\in \V$. If there exists a 2-walk $$t_1\rightarrow t_2\rightarrow t_3 ~{\rm in}~ D(\V_1),$$ and $$N^+(u)=\V_1 ~{\rm for ~all} ~u\in\V_2,$$ then $t_1$ has no successor in $\V_2$.
\end{lemma}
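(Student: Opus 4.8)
The plan is to argue by contradiction: suppose $t_1$ has a successor $s\in\V_2$, and derive a forbidden configuration from $\mathscr{F}$. The key observation is that we are handed two pieces of structure that interact badly. On one side, we have a 2-walk $t_1\rightarrow t_2\rightarrow t_3$ lying entirely inside $\V_1=\V_1(v)=N^+(v)$. On the other side, the hypothesis $N^+(u)=\V_1$ for all $u\in\V_2$ tells us that every vertex of $\V_2$—in particular the assumed successor $s$ of $t_1$—points to all of $\V_1$, and hence $s\rightarrow t_2$ (since $t_2\in\V_1$). Now I would exhibit two distinct 2-walks sharing initial and terminal vertices. First I would observe that $t_1\rightarrow t_2\rightarrow t_3$ is a 2-walk from $t_1$ to $t_3$. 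Second, since $s\rightarrow\V_1$, we have $s\rightarrow t_3$ as well (because $t_3\in\V_1$); combined with $t_1\rightarrow s$, this gives a second 2-walk $t_1\rightarrow s\rightarrow t_3$. Because $s\in\V_2$ while $t_2\in\V_1$, these are disjoint sets, so $s\neq t_2$, and the two 2-walks are distinct. This is exactly a copy of $P_{2,2}$ (or $C_{2,2}$ if $t_1=t_3$), contradicting that $D$ is $\mathscr{F}$-free.

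A subtle point I must verify is that the two 2-walks are genuinely members of $\mathscr{F}$ rather than a degenerate configuration. The middle vertices $t_2$ and $s$ are distinct, as noted, so the two walks differ. I should also confirm that the existence of $s\rightarrow t_3$ really follows: this needs only $t_3\in\V_1$ together with $N^+(s)=\V_1$, which is immediate from the hypothesis since $s\in\V_2$. One degeneracy to handle is whether $t_1$ itself could coincide with some of the other vertices; but the argument only uses that $t_1\rightarrow t_2$, $t_2\rightarrow t_3$, $t_1\rightarrow s$, and $s\rightarrow t_3$ hold as arcs, together with $t_2\neq s$, so no further distinctness among $t_1,t_3$ is needed to produce a forbidden subgraph in $\mathscr{F}=\{P_{2,2},C_{2,2}\}$.

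The argument is short and I do not expect a serious obstacle; the only care needed is bookkeeping with the partition. The essential mechanism is that $s$ lies in $\V_2$ and therefore dominates all of $\V_1\supseteq\{t_2,t_3\}$, so it can be substituted for $t_2$ as an alternative intermediate vertex on a walk to $t_3$, while remaining distinct from $t_2$ precisely because $\V_1\cap\V_2=\emptyset$. This forces the two distinct 2-walks that the $\mathscr{F}$-free hypothesis forbids, completing the contradiction and hence the proof.
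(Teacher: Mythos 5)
Your argument is correct and is essentially identical to the paper's proof: assume a successor $s\in\V_2$ of $t_1$, use $N^+(s)=\V_1$ to get $s\rightarrow t_3$, and obtain the two distinct 2-walks $t_1\rightarrow t_2\rightarrow t_3$ and $t_1\rightarrow s\rightarrow t_3$ (distinct since $t_2\in\V_1$ and $s\in\V_2$), contradicting $\mathscr{F}$-freeness. The extra care you take about degeneracies ($t_1=t_3$ giving $C_{2,2}$ instead of $P_{2,2}$) is a reasonable addition the paper leaves implicit.
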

\begin{proof}Suppose $t_1$ has a successor $t_4\in \V_2$. Since $N^+(t_4)=\V_1$, we have $t_4\rightarrow t_3$. Hence we obtain two distinct 2-walks from $t_1$ to $t_3$, a contradiction with $D$ being $\mathscr{F}$-free.
\end{proof}
 Let $\Delta^+(D)$ and $\Delta^-(D)$ denote the maximum outdegree and indegree of $D$. If no confusion arises, we write  $\Delta^+$ and $\Delta^-$, respectively.

\begin{lemma}\label{le4}
 Let $D=(\V,\A)\in EX(n)$. If $n$ is odd, then $\Delta^+\in \{\frac{n- 1}{2},\frac{n+ 1}{2}\}$; if $n$ is even, then $\Delta^+\in \{\frac{n}{2}-1,\frac{n}{2},\frac{n}{2}+1\}$.
\end{lemma}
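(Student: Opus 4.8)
The plan is to bound $\Delta^+$ from both sides by analyzing the structure forced on an extremal digraph $D\in EX(n)$. The key quantitative tool will be the lower bound on $ex(n)$ from Lemma~\ref{le8} together with the local constraints in Lemma~\ref{le2}. Fix a vertex $v$ of maximum outdegree, so $|\V_1(v)|=\Delta^+$, and use the partition $\V=\V_1(v)\cup \V_2(v)$ with the further refinement $\V_2=\V_3\cup\V_4$. I would count the arcs of $D$ by grouping them according to where their tails and heads lie among these blocks, and extract an upper bound for $e(D)$ that is a function of $\Delta^+$; since $D$ is extremal, this upper bound must be at least $ex(n)$, and that inequality will pin $\Delta^+$ into the claimed short interval.

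First I would control the number of arcs emanating from $\V_1$. By Lemma~\ref{le2}(ii), for each $u\in\V$ there is at most one arc from $\V_1(v)$ into $u$, so $e(\V_1,\V)\le n$; since $D$ is strict and loopless this is slightly better, $e(\V_1,\V_1)\le |\V_1|$ with no loops and $e(\V_1,\V_2)\le |\V_2|$. Equivalently, $e(\V_1,\V)\le n$ but with the diagonal excluded. Next I would bound the arcs out of $\V_2$: each vertex of $\V_2$ has outdegree at most $\Delta^+$ by maximality of $v$, so $e(\V_2,\V)\le |\V_2|\,\Delta^+$. Writing $|\V_1|=\Delta^+$ and $|\V_2|=n-\Delta^+$, adding the two estimates gives a bound of the rough shape
\begin{equation*}
e(D)\le n + (n-\Delta^+)\Delta^+ = n + n\Delta^+-(\Delta^+)^2,
\end{equation*}
a downward parabola in $\Delta^+$ maximized near $\Delta^+=n/2$. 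Comparing $n+n\Delta^+-(\Delta^+)^2\ge ex(n)\ge \tfrac{n^2+4n-5}{4}$ (odd case) or $\tfrac{n^2+4n-4}{4}$ (even case) forces $\Delta^+$ to lie within a bounded distance of $n/2$, and a careful evaluation of the integer parabola should isolate exactly the two or three admissible values stated.

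The main obstacle is that the crude parabola bound is loose at its maximum and, near $\Delta^+=n/2$, the inequality $n+n\Delta^+-(\Delta^+)^2\ge ex(n)$ will admit a few more values than claimed; shaving it down to the exact sets $\{\frac{n-1}{2},\frac{n+1}{2}\}$ and $\{\frac n2-1,\frac n2,\frac n2+1\}$ will require tightening the arc count by one or two in the borderline cases. Concretely I expect to need the no-loop refinement ($e(\V_1,\V_1)\le|\V_1|-1$ since a root contributing a loop is excluded) and Lemma~\ref{le2}(i) to argue that distinct successors of $v$ cannot share successors, which removes further overcounting inside $\V_1$. I would handle the parity cases separately, plug the candidate integer values of $\Delta^+$ into the refined inequality, and discard those for which $e(D)$ cannot reach $ex(n)$; the surviving values are precisely the claimed ones. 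The delicate bookkeeping at the endpoints of the interval, rather than any conceptual difficulty, is where the real work lies.
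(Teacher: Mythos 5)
Your counting argument is exactly the paper's proof: fix $v$ with $d^+(v)=\Delta^+$, bound $e(\V_1,\V)\le n$ via Lemma~\ref{le2}(ii) and $e(\V_2,\V)\le(n-\Delta^+)\Delta^+$ by maximality, and compare $e(D)\le \frac{n^2+4n}{4}-\bigl(\Delta^+-\frac{n}{2}\bigr)^2$ with the lower bound (\ref{1.1}). Your worry about the bound being too loose is unfounded: for odd $n$ the inequality forces $\bigl(\Delta^+-\frac{n}{2}\bigr)^2\le\frac{5}{4}$, and since $\Delta^+-\frac{n}{2}$ is a half-odd-integer only $\pm\frac{1}{2}$ survive, while for even $n$ it forces $\bigl(\Delta^+-\frac{n}{2}\bigr)^2\le 1$ with $\Delta^+-\frac{n}{2}$ an integer, giving exactly the three stated values; no further tightening is needed. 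Note also that your proposed refinement $e(\V_1,\V_1)\le|\V_1|-1$ is false in general (looplessness does not prevent every vertex of $\V_1$ from having one predecessor inside $\V_1$, as indeed happens in $D_1$ where $D(\V_1)=T(y_1,y_2)$ has $|\V_1|$ arcs), but since that refinement is unnecessary the proof goes through as the paper's does.
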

\begin{proof} Let $v\in\V$ such that $d^+(v)=\Delta^+$. We count the size of $D$ in the following way
\begin{eqnarray*}
e(D)=e(\V_1,\mathcal{V})+e(\V_2,\mathcal{V})\le n+(n-\Delta^+)\Delta^+= -(\Delta^+-\frac{n}{2})^2+\frac{n^2+4n}{4}.
\end{eqnarray*}
The inequality is derived from Lemma \ref{le2} and the definition of $\Delta^+$. From (\ref{1.1}), we get the lemma.
\end{proof}

Let $$\alpha=\max\limits_{\substack{v\in \V ,\\ d^+(v)=\Delta^+  }}\max\limits_{u\in \mathcal{V}}{e(u,\V_2(v))}.$$ We give an upper bound on $\alpha$ as follows.
\begin{lemma}\label{le5}
 Let $D=(\V,\A)\in EX(n)$ and $n\ge 8$. Then $\alpha\le 1$.
 \end{lemma}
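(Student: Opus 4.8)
The plan is to argue by contradiction: assume $\alpha\ge 2$, fix a vertex $v$ with $d^+(v)=\Delta^+$ realizing $\alpha$, and a vertex $u$ with two distinct successors $a,b\in\V_2(v)$. Write $p=\Delta^+=|\V_1|$ and $q=n-p=|\V_2|$, so by Lemma~\ref{le4} $p$ lies in a narrow band around $n/2$. Following the count in the proof of Lemma~\ref{le4}, I would record the size as $e(D)=n+pq-L$, where the loss $L=L_1+L_2$ splits as $L_1=\#\{w\in\V:\ w$ has no predecessor in $\V_1\}$ (the slack in $e(\V_1,\V)\le n$) and $L_2=\sum_{w\in\V_2}(p-d^+(w))$ (the slack in $e(\V_2,\V)\le pq$). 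Since $D\in EX(n)$, $e(D)=ex(n)$, and comparing $n+pq$ with the value of $ex(n)$ from Lemma~\ref{le8} gives $L\le 1$ in every case, with $L=0$ when $n$ is even and $p=\tfrac n2\pm1$. Hence it suffices to exhibit a loss of at least $2$ (or, in the even $p=\tfrac n2\pm1$ subcase, a loss of at least $1$).

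Two structural facts drive the argument. First, every vertex of $\V_3$ has no successor in $\V_2$: if $N^+(w)=\V_1$ then $d^+(w)=p=\Delta^+$ is already maximal, so $w$ points nowhere outside $\V_1$; consequently the witness $u$ lies in $\V_1\cup\V_4$, and $a,b$ cannot both lie in $\V_3$, since that would make $N^+(a)=N^+(b)=\V_1$, contradicting Lemma~\ref{le2}(i). Second, by Lemma~\ref{le2}(i) the out-neighbourhoods of the successors of any fixed vertex are pairwise disjoint; applied to $u$ this gives $N^+(a)\cap N^+(b)=\emptyset$.

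The engine of the contradiction is a covering bound. The disjointness $N^+(a)\cap N^+(b)=\emptyset$, together with one of $a,b$ (the one in $\V_4$) missing part of $\V_1$, forces the other — say $b$ — to send almost all of its $p$ out-arcs into $\V_2$, i.e. $e(b,\V_2)\ge p-1$. Now apply Lemma~\ref{le2}(i) to $b$: its successors have pairwise disjoint out-neighbourhoods, so $\sum_{x\in N^+(b)}d^+(x)\le n$. Under the standing assumption $L\le 1$ at most one vertex of $\V_2$ is degree-deficient, so all but one of the $\ge p-1$ successors of $b$ lying in $\V_2$ have full outdegree $p$; this yields $\sum_{x\in N^+(b)}d^+(x)\ge (p-1)p-1$, whence $p^2-p-1\le n$. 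For the slack-one values of $p$ (odd $p\in\{(n-1)/2,(n+1)/2\}$ and even $p=n/2$) with $n\ge 8$ this inequality fails, the contradiction. In the complementary configuration, where $b$ cannot be pushed to have $\ge p-1$ successors in $\V_2$, the argument instead forces $a$ (with $N^+(a)\subseteq\V_1$ and $N^+(a)\ne\V_1$) to be degree-deficient while $L_1=0$ pins $e(\V_1,\V)=n$; tracking this through the value of $ex(n)$ again gives either $L\ge 2$ or, in the $p=\tfrac n2\pm1$ cases, a single unit of loss that already contradicts $L=0$.

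The hard part will be the boundary cases where $p$ is as small as the band in Lemma~\ref{le4} allows — concretely $n=8$ with $\Delta^+=\tfrac n2-1=3$ — because there the crude covering inequality $p^2-p-1\le n$ is satisfied (it can even hold with equality) and the degree-sum slack alone need not reach the required amount. For these cases I expect to replace the counting by a direct appeal to $\mathscr{F}$-freeness, carrying out a short case analysis on the precise locations of $u,a,b$ among $\V_1,\V_3,\V_4$ and on their mutual adjacencies, to expose either a forbidden pair of $2$-walks or an unavoidable extra unit of loss. Confirming that the threshold $n\ge 8$ is exactly what makes every branch close is the delicate point.
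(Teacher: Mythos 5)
Your overall framework (the loss accounting $e(D)=n+pq-L$ with $L\le 1$, the disjointness $N^+(a)\cap N^+(b)=\emptyset$, and the observation that $a,b$ cannot both lie in $\V_3$) is sound, but the engine of your contradiction has a genuine gap. The step ``$e(b,\V_2)\ge p-1$'' only follows when one of $a,b$ lies in $\V_3$: then its out-neighbourhood is all of $\V_1$ and disjointness pushes the other entirely into $\V_2$. When both $a,b\in\V_4$ --- a perfectly possible configuration --- disjointness plus $d^+(a)+d^+(b)\ge 2p-1$ gives only the \emph{sum} bound $e(a,\V_2)+e(b,\V_2)\ge p-1$; the fact that the vertex in $\V_4$ ``misses part of $\V_1$'' gives it at least one successor in $\V_2$, not the other vertex $p-1$ of them. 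In the worst even split $e(a,\V_2)\approx e(b,\V_2)\approx(p-1)/2$ your covering inequality degrades to roughly $(p-1)^2/2\le n$, which is satisfied e.g.\ for $n=9$, $p=(n+1)/2=5$, so the contradiction fails to close even away from your acknowledged boundary case. That boundary case ($n=8$, $\Delta^+=3$) is itself deferred to an unspecified case analysis, and the ``complementary configuration'' paragraph is too vague to verify. As written the proof is incomplete.

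The paper's argument is different and avoids all of this. It first kills $\alpha\ge 3$ outright: three successors $u_1,u_2,u_3\in\V_2(v)$ of a single vertex have pairwise disjoint out-neighbourhoods, so $\sum_i d^+(u_i)\le n$, and replacing $3\Delta^+$ by $n$ in the count gives $e(D)\le\frac{n^2+2n+9}{4}<ex(n)$ for $n\ge 8$. For $\alpha=2$ it uses a bootstrap you are missing: one of the two successors, say $u_1$, must satisfy $d^+(u_1)=\Delta^+$ (else both are deficient and the count drops by $2$); but then $u_1$ itself qualifies as the vertex ``$v$'' in the definition of $\alpha$, so $e(u_2,\V_2(u_1))\le\alpha=2$, and since $N^+(u_2)\cap\V_1(u_1)=\emptyset$ this forces $d^+(u_2)\le 2$ --- a massive deficiency that finishes the count uniformly for every admissible $\Delta^+$ and every $n\ge 8$, with no small-$\Delta^+$ exceptions. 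That self-referential use of $\alpha$ is exactly what should replace your covering step.
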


 \begin{proof} Suppose $\alpha\ge 3$. Then there exist $v$ and $u$ such that $d^+(v)=\Delta^+$ and $u$ has 3 successors $u_1,u_2,u_3\in \V_2(v)$. By Lemma \ref{le2}, we have $\sum\limits_{i=1}^{3}d^+(u_i)\le n$ and

\begin{eqnarray*}
e(D)&=& \sum\limits_{w\in \V_2\backslash\{u_1,u_2,u_3\}}d^+(w)+\sum\limits_{i=1}^{3}d^+(u_i)+\sum\limits_{w\in \V}e(\V_1,w)\\
&\le& (n-\Delta^+-3)\Delta^++n+n\\
&=&-(\Delta^+-\frac{n-3}{2})^2+\frac{n^2+2n+9}{4}.
\end{eqnarray*}
From (\ref{1.1}), we obtain $e(D)<ex(n)$, a contradiction. Hence, we have $\alpha\le 2$.

Now suppose $\alpha=2$. Then there exist $v$ and $u$ such that $d^+(v)=\Delta^+$ and $u$ has two successors $u_1,u_2\in \V_2(v)$. We claim that one of $u_1,u_2$ has $\Delta^+$ successors. Otherwise, we have $d^+(u_1)+d^+(u_2)\le 2\Delta^+-2$. Then
 \begin{eqnarray*}
 e(D)&=&\sum\limits_{w\in \V_2\backslash\{u_1,u_2\}}d^+(w)+\sum\limits_{i=1}^{2}d^+(u_i)+\sum\limits_{w\in \V}e(\V_1,w)\\
 &\le& (n-\Delta^+-2)\Delta^++ 2\Delta^+-2+n\\
 &=& (n-\Delta^+)\Delta^+-2+n\\
 &\le&\frac{n^2+4n}{4}-2.
 \end{eqnarray*}
 From (\ref{1.1}) we obtain $e(D)<ex(n)$, a contradiction. Hence we could assume $d^+(u_1)=\Delta^+$.

 By Lemma \ref{le2}, $u_2$ shares no common successor with $u_1$, i.e.,$$e(u_2,\V_1(u_1))=0.$$ By the definition of $\alpha$, we have $$e(u_2,\V_2(u_1))\le \alpha.$$ It follows that $d^+(u_2)\le \alpha= 2$. Hence, by (\ref{1.1}) we obtain
 \begin{eqnarray*}
e(D)&=& \sum\limits_{w\in \V_2\backslash\{u_1,u_2\}}d^+(w)+\sum\limits_{i=1}^{2}d^+(u_i)+\sum\limits_{w\in \V}e(\V_1,w)\\
&\le& (n-\Delta^+-2)\Delta^++\Delta^++2+n\\
&=&-(\Delta^+-\frac{n-1}{2})^2+\frac{n^2+2n+9}{4}\\
&<&ex(n).
\end{eqnarray*}
Therefore, we have $\alpha\le 1$.
\end{proof}



Let $D=(\V,\A)$ be a digraph. Let $v\in \V$ with $d^+(v)=\Delta^+$ and $u\in \V_2$. If $e(u,\V_1)=\Delta^+-1$, we denote the unique vertex of $\V_1\backslash N^+_{\V_1}(u)$ by $u'$.

\begin{lemma}\label{le6}
Let $D=(\V,\A)\in EX(n)$ with $n\ge 8$ and let $v\in \V$ with $d^+(v)=\Delta^+$. If $u_1\rightarrow u_2$ is in $D(\V_2(v))$, then $$N^+_{\V_1(v)}(u_1)\nrightarrow N^+_{\V_1(v)}(u_2).$$ Moreover, if $\V_1(v)\rightarrow \V_1(v)$ and $d^+(u_1)=d^+(u_2)=\Delta^+$, then $$u_1'\rightarrow N^+_{\V_1(v)}(u_2),~ u_2\in \V_4(v) ~{\rm and} ~N^+_{\V_1(v)}(u_2)=\V_1(v)\backslash \{u_1'\}.$$
\end{lemma}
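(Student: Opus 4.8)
The plan is to handle the two assertions separately, with the first serving as the engine for the second. For the opening claim, I would argue by contradiction: suppose there is an arc $a\rightarrow b$ with $a\in N^+_{\V_1(v)}(u_1)$ and $b\in N^+_{\V_1(v)}(u_2)$. Then $u_1\rightarrow a\rightarrow b$ and $u_1\rightarrow u_2\rightarrow b$ are both $2$-walks from $u_1$ to $b$, and they are distinct because $a\in \V_1(v)$ while $u_2\in \V_2(v)$ forces $a\ne u_2$. This exhibits two distinct $2$-walks with common initial vertex $u_1$ and common terminal vertex $b$, which is impossible in an $\mathscr{F}$-free digraph; hence $N^+_{\V_1(v)}(u_1)\nrightarrow N^+_{\V_1(v)}(u_2)$. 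This part uses nothing beyond $\mathscr{F}$-freeness.

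For the ``moreover'' part, I would first pin down the out-degree structure of $u_1$ and $u_2$ inside $\V_1(v)$. Since $u_1\rightarrow u_2$ with $u_2\in\V_2(v)$, vertex $u_1$ has a successor in $\V_2(v)$; Lemma~\ref{le5} ($\alpha\le 1$) then gives $e(u_1,\V_2(v))=1$, so $e(u_1,\V_1(v))=\Delta^+-1$ and $N^+_{\V_1(v)}(u_1)=\V_1(v)\backslash\{u_1'\}$. Next I would exploit the hypothesis $\V_1(v)\rightarrow\V_1(v)$, namely that every vertex of $\V_1(v)$ has a unique predecessor inside $\V_1(v)$. By the first part there is no arc from $\V_1(v)\backslash\{u_1'\}$ to $N^+_{\V_1(v)}(u_2)$, so the unique $\V_1(v)$-predecessor of each $b\in N^+_{\V_1(v)}(u_2)$ must be the only remaining candidate $u_1'$. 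This yields $u_1'\rightarrow N^+_{\V_1(v)}(u_2)$, the first conclusion.

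Finally I would close the argument with a no-loop observation. Since $u_1'\rightarrow b$ for every $b\in N^+_{\V_1(v)}(u_2)$ and $D$ is strict, $u_1'\notin N^+_{\V_1(v)}(u_2)$; in particular $N^+_{\V_1(v)}(u_2)\ne\V_1(v)$, so $u_2\in\V_4(v)$. Applying Lemma~\ref{le5} to $u_2$ gives $e(u_2,\V_2(v))\le 1$, hence $e(u_2,\V_1(v))\ge\Delta^+-1$; the already-established fact $N^+_{\V_1(v)}(u_2)\ne\V_1(v)$ rules out $e(u_2,\V_1(v))=\Delta^+$, so $e(u_2,\V_1(v))=\Delta^+-1$ and $N^+_{\V_1(v)}(u_2)=\V_1(v)\backslash\{u_2'\}$. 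Comparing with $u_1'\notin N^+_{\V_1(v)}(u_2)$ forces $u_2'=u_1'$, giving $N^+_{\V_1(v)}(u_2)=\V_1(v)\backslash\{u_1'\}$.

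I expect the main obstacle to be the third paragraph's interplay between the functional structure $\V_1(v)\rightarrow\V_1(v)$ and the non-adjacency supplied by the first part: one must correctly identify $u_1'$ as the forced common predecessor and then use strictness (absence of loops) to convert this into the statements $u_2\in\V_4(v)$ and $u_2'=u_1'$. The first paragraph and the degree bookkeeping via Lemma~\ref{le5} are routine; the subtlety lies entirely in keeping track of which single vertex of $\V_1(v)$ is excluded from each neighborhood.
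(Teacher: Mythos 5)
Your proposal is correct and follows essentially the same route as the paper's own proof: the same two-walk contradiction for the first assertion, the same use of Lemma~\ref{le5} to get $N^+_{\V_1(v)}(u_1)=\V_1(v)\backslash\{u_1'\}$, and the same combination of the first part with $\V_1(v)\rightarrow\V_1(v)$ to force $u_1'$ as the common predecessor, followed by the loop-freeness and degree count for $u_2$. No discrepancies to report.
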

\begin{proof}  Suppose $N^+_{\V_1(v)}(u_1)\rightarrow N^+_{\V_1(v)}(u_2)$. Then there exists $u_3\in N^+_{\V_1(v)}(u_1)$ and $u_4\in N^+_{\V_1(v)}(u_2)$ such that $u_3\rightarrow u_4$. We have $u_1\rightarrow u_3\rightarrow u_4$ and $u_1\rightarrow u_2\rightarrow u_4$, a contradiction.

For the second part, since $u_1\rightarrow u_2$ is in $D(\V_2(v))$, by Lemma \ref{le5} we have $e(u_1,\V_2(v))= 1$. It follows that $$e(u_1,\V_1(v))=\Delta^+-1{\rm ~ and~} N^+_{\V_1(v)}(u_1)=\V_1\backslash \{u_1'\}.$$
Since $N^+_{\V_1(v)}(u_1)\nrightarrow N^+_{\V_1(v)}(u_2)$ and $\V_1(v)\rightarrow \V_1(v)$, we have $u_1'\rightarrow N^+_{\V_1(v)}(u_2)$. It follows that $u_1'\notin N^+(u_2)$ and $u_2\in \V_4(v)$. By Lemma \ref{le5}, we have $e(u_2,\V_2(v))\le 1$ and $e(u_2,\V_1(v))\ge \Delta^+-1$. It follows that $N^+_{\V_1(v)}(u_2)=\V_1\backslash \{u_1'\}$.
\end{proof}
\begin{lemma}\label{le9}Let $D=(\V,\A)\in EX(n)$ with $n\ge 8$ and let $v\in\V$ with $d^+(v)=\Delta^+$. If $\V_1\rightarrow \V$ and $$d^+(u)=\Delta^+~{\rm ~for ~all ~}u\in \V_2,$$ then $$N^+(u)=\V_1~{\rm ~for ~all ~}u\in \V_2.$$
\end{lemma}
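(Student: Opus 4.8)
The plan is to fix $v \in \V$ with $d^+(v) = \Delta^+$ so that $\V_1 = \V_1(v)$, $\V_2 = \V_2(v)$, and to exploit the two hypotheses: $\V_1 \rightarrow \V$ (meaning each vertex of $\V$ has exactly one predecessor in $\V_1$) and $d^+(u) = \Delta^+$ for every $u \in \V_2$. I want to show $N^+(u) = \V_1$ for all $u \in \V_2$, which by the definition of $\V_3$ is the same as showing $\V_4 = \emptyset$, i.e. no vertex of $\V_2$ fails to dominate all of $\V_1$. Since $\Delta^+ = |\V_1| = d^+(v)$ and each $u \in \V_2$ satisfies $d^+(u) = \Delta^+$, the outgoing arcs of $u$ are split between $\V_1$ and $\V_2$; the goal is to force all $\Delta^+$ of them into $\V_1$.

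First I would record the consequences of $\V_1 \rightarrow \V$. Because every vertex of $\V$ (including every vertex of $\V_1$ itself) has exactly one predecessor in $\V_1$, in particular $\V_1 \rightarrow \V_1$, so the second part of Lemma~\ref{le6} is available. Suppose toward a contradiction that some $u \in \V_2$ has $N^+(u) \ne \V_1$; then $e(u, \V_2) \ge 1$ since $d^+(u) = \Delta^+ = |\V_1|$, so $u$ has a successor inside $\V_2$. I would pick such a $u_1$ with an arc $u_1 \rightarrow u_2$ in $D(\V_2)$. By Lemma~\ref{le5} (applicable since $n \ge 8$ and $D \in EX(n)$), $e(u_1, \V_2) = 1$ and hence $e(u_1, \V_1) = \Delta^+ - 1$, so $u_1'$ is well defined and $N^+_{\V_1}(u_1) = \V_1 \backslash \{u_1'\}$. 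Since $u_2 \in \V_2$ also has $d^+(u_2) = \Delta^+$ by hypothesis, Lemma~\ref{le6} gives $u_1' \rightarrow N^+_{\V_1}(u_2)$, $u_2 \in \V_4$, and $N^+_{\V_1}(u_2) = \V_1 \backslash \{u_1'\}$. The key structural fact extracted here is that $u_1'$ is the unique predecessor in $\V_1$ of every vertex in $\V_1 \backslash \{u_1'\}$.

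The heart of the argument is then to derive a contradiction from this forced structure using $\V_1 \rightarrow \V_1$. From $u_1' \rightarrow N^+_{\V_1}(u_2) = \V_1 \backslash \{u_1'\}$, the single vertex $u_1'$ dominates every other vertex of $\V_1$, so $u_1'$ has out-degree at least $|\V_1| - 1 = \Delta^+ - 1$ just within $\V_1$. Meanwhile, because $\V_1 \rightarrow \V_1$ assigns each vertex of $\V_1$ exactly one predecessor inside $\V_1$, and $u_1'$ already serves as that predecessor for all of $\V_1 \backslash \{u_1'\}$, no other vertex of $\V_1$ can have a successor in $\V_1 \backslash \{u_1'\}$. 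This pins down the internal arc structure of $D(\V_1)$ severely. I would then combine this with the matching/domination counts: the vertices of $\V_1$ other than $u_1'$ must send their arcs elsewhere, and the degree constraints $d^+(w) \le \Delta^+$ together with $\V_1 \rightarrow \V$ (one predecessor in $\V_1$ for each of the $n$ vertices) pin the size $e(D)$ below $ex(n)$, contradicting $D \in EX(n)$.

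I expect the main obstacle to be the final counting step: turning the local structural rigidity around $u_1'$ into a global edge-count that strictly undercuts $ex(n)$. The delicate point is bookkeeping the arcs out of $u_1'$ and out of the remaining vertices of $\V_1$ without double-counting, while simultaneously respecting $\V_1 \rightarrow \V$ (which caps incoming arcs from $\V_1$ at one per vertex) and the outdegree cap $\Delta^+$. An alternative, possibly cleaner, route to the same contradiction is to show directly that the existence of $u_1 \rightarrow u_2$ in $D(\V_2)$ together with $u_1' \rightarrow (\V_1\backslash\{u_1'\})$ manufactures two distinct $2$-walks with a common initial and terminal vertex — for instance tracing a walk through $u_1'$ and a competing walk through $u_1$ or $u_2$ into the heavily-dominated set $\V_1 \backslash \{u_1'\}$ — thereby contradicting $\mathscr{F}$-freeness rather than the extremality count. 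I would attempt the $\mathscr{F}$-free contradiction first, since it avoids the fragile arithmetic, and fall back on the counting bound only if the walk-collision does not close cleanly.
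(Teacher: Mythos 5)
Your setup coincides with the paper's: assume an arc $u_1\rightarrow u_2$ in $D(\V_2)$, invoke Lemma~\ref{le5} to get $e(u_1,\V_2)=1$, and invoke the second part of Lemma~\ref{le6} to get $u_1'\rightarrow \V_1\backslash\{u_1'\}$, $u_2\in\V_4$ and $N^+_{\V_1}(u_2)=\V_1\backslash\{u_1'\}$. But neither of your two proposed ways of closing the argument works. The counting route is not merely fragile, it is impossible: the hypotheses of the lemma already force $e(\V_1,\V)=n$ (from $\V_1\rightarrow\V$ and Lemma~\ref{le2}) and $e(\V_2,\V)=|\V_2|\Delta^+$, so $e(D)=n+(n-\Delta^+)\Delta^+\ge ex(n)$ with no room to ``undercut'' anything; the contradiction must come from $\mathscr{F}$-freeness, not from arithmetic. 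The walk-collision route as you describe it also fails: a collision at a vertex $t\in\V_1\backslash\{u_1'\}$ would need two distinct predecessors of $t$ sharing a common predecessor, but the structure you derived says precisely that $u_1'$ is the \emph{only} $\V_1$-predecessor of each such $t$ and that $N^+_{\V_1}(u_1)\nrightarrow N^+_{\V_1}(u_2)$ --- which is exactly the conclusion of the first part of Lemma~\ref{le6}, i.e.\ a configuration that $\mathscr{F}$-freeness permits. So no forbidden pair of $2$-walks appears ``in the heavily-dominated set,'' and this local picture is in fact realized by the extremal digraphs $D_2$ and $D_6$.

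The missing idea is to iterate the chase two more steps into $\V_2$ and locate the collision at a vertex of $\V_2$, not of $\V_1$. Since $u_2\in\V_4$ and $d^+(u_2)=\Delta^+$, $u_2$ has a successor $u_3\in\V_2$; the $2$-walk $u_1\rightarrow u_2\rightarrow u_3$ together with $u_1\rightarrow\V_1\backslash\{u_1'\}$ forces $\V_1\backslash\{u_1'\}\nrightarrow u_3$, and then $\V_1\rightarrow\V$ forces $u_1'\rightarrow u_3$. Applying Lemma~\ref{le6} to $u_2\rightarrow u_3$ gives $u_3\in\V_4$, so $u_3$ has a successor $u_4\in\V_2$, yielding $u_1'\rightarrow u_3\rightarrow u_4$. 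By Lemma~\ref{le5}, $u_3$ is the unique $\V_2$-successor of $u_1'$, so $u_1'\nrightarrow u_4$; but $\V_1\rightarrow\V$ gives $u_4$ a predecessor $t\in\V_1\backslash\{u_1'\}$, and $u_1'\rightarrow t$, so $u_1'\rightarrow t\rightarrow u_4$ is a second $2$-walk from $u_1'$ to $u_4$ --- the desired contradiction. Without this extension of the chain (and the repeated use of $\V_1\rightarrow\V$ to pin down the $\V_1$-predecessors of $u_3$ and $u_4$), your proof does not close.
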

\begin{proof} Suppose there exists an arc $(u_1,u_2)$ in $D(\V_2)$. By Lemma \ref{le6}, we have $u_1'\rightarrow \V_1\backslash \{u_1'\}$ and $u_2\in \V_4$. Hence $u_2$ has a successor $u_3\in \V_2$. Since $u_1\rightarrow u_2\rightarrow u_3$ and $u_1\rightarrow \V_1\backslash \{u_1'\}$, we get $\V_1\backslash \{u_1'\}\nrightarrow u_3$. Since $\V_1\rightarrow \V$, we obtain $u_1'\rightarrow u_3$. By Lemma \ref{le6}, we have $u_3\in \V_4$. Then it has a successor $u_4\in \V_2$, which implies $u_1'\rightarrow u_3\rightarrow u_4$. By Lemma \ref{le5}, $u_1'\nrightarrow u_4$. From $\V_1\rightarrow \V$ we have $u_1'\rightarrow \V_1\backslash \{u_1'\}\rightarrow u_4$, which contradicts $D\in EX(n)$.
\end{proof}

\begin{lemma}\label{le7}
Let $D=(\V,\A)\in EX(n)$ with $n\ge 8$. If $\Delta^+\ge \Delta^-$, then $\Delta^+=\lceil\frac{n+1}{2}\rceil$.
\end{lemma}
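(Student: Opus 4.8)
The plan is to show that under the hypothesis $\Delta^+\ge\Delta^-$, the maximum outdegree $\Delta^+$ is forced to take the larger of the two values permitted by Lemma~\ref{le4}. By Lemma~\ref{le4}, when $n$ is odd we have $\Delta^+\in\{\frac{n-1}{2},\frac{n+1}{2}\}$ and when $n$ is even $\Delta^+\in\{\frac{n}{2}-1,\frac{n}{2},\frac{n}{2}+1\}$; in both cases $\lceil\frac{n+1}{2}\rceil=\frac{n+1}{2}$ ($n$ odd) or $\frac{n}{2}+1$ ($n$ even) is the top of this range. So I must rule out every smaller value. The natural route is a refined edge-count of the form used in Lemma~\ref{le4}: fix $v$ with $d^+(v)=\Delta^+$, write $e(D)=e(\V_1,\V)+e(\V_2,\V)$, and use Lemma~\ref{le2}(ii) to get $e(\V_1,\V)\le n$ together with $e(\V_2,\V)\le\sum_{u\in\V_2}d^+(u)$. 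The point is that if $\Delta^+$ is strictly below $\lceil\frac{n+1}{2}\rceil$, this count gives $e(D)<ex(n)$, contradicting $D\in EX(n)$.

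The first step is to exploit the hypothesis $\Delta^+\ge\Delta^-$. Since every outdegree is at most $\Delta^+$ and every indegree is at most $\Delta^-\le\Delta^+$, I can bound $e(\V_2,\V)$ more tightly than the crude $(n-\Delta^+)\Delta^+$. In particular, each vertex $u\in\V_2(v)$ has $d^+(u)\le\Delta^+$, so $e(\V_2,\V)\le(n-\Delta^+)\Delta^+$, recovering the Lemma~\ref{le4} bound $e(D)\le -(\Delta^+-\frac{n}{2})^2+\frac{n^2+4n}{4}$. Equality throughout then forces $e(\V_1,\V)=n$, i.e.\ $\V_1\rightarrow\V$, and forces every $u\in\V_2$ to have $d^+(u)=\Delta^+$. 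This is precisely the configuration handled by Lemma~\ref{le9}, which upgrades these equalities to $N^+(u)=\V_1$ for all $u\in\V_2$. So the extremal hypothesis pins down a very rigid structure whenever $\Delta^+$ sits at the vertex of the parabola.

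The second step is to compare the indegree side against $\Delta^-$. Once $\V_1\rightarrow\V$ and $N^+(u)=\V_1$ for all $u\in\V_2$, every vertex of $\V_1$ receives an arc from each of the $|\V_2|=n-\Delta^+$ vertices of $\V_2$, plus up to one more from within $\V_1$ (by Lemma~\ref{le2}(ii), each vertex has at most one predecessor in $\V_1$). Hence some vertex of $\V_1$ has indegree at least $n-\Delta^+$, giving $\Delta^-\ge n-\Delta^+$. Combined with $\Delta^+\ge\Delta^-$ this yields $\Delta^+\ge n-\Delta^+$, i.e.\ $\Delta^+\ge\frac{n}{2}$, which already excludes the smallest candidate value $\frac{n-1}{2}$ (odd) or $\frac{n}{2}-1$ (even). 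To climb to the very top of the range I expect to re-run the count after subtracting the contribution of this high-indegree vertex, or equivalently to observe that the inequality $\Delta^+\ge n-\Delta^+$ together with the parity constraints of Lemma~\ref{le4} leaves only $\Delta^+=\lceil\frac{n+1}{2}\rceil$.

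The main obstacle I anticipate is the even case, where Lemma~\ref{le4} allows the middle value $\frac{n}{2}$ in addition to $\frac{n}{2}+1$; the edge-count alone gives equality for $\Delta^+=\frac{n}{2}$, so the rigid structure above applies and I must derive a genuine contradiction rather than a strict inequality. Here the decisive leverage should be the interplay between the forced structure ($\V_1\rightarrow\V$, $N^+(u)=\V_1$ on $\V_2$) and the degree hypothesis $\Delta^+\ge\Delta^-$: when $\Delta^+=\frac{n}{2}$ we get $|\V_2|=\frac{n}{2}$, so a vertex of $\V_1$ can have indegree as large as $\frac{n}{2}+1>\Delta^+$, violating $\Delta^-\le\Delta^+$ unless the extra in-arc from $\V_1$ is absent for every vertex of $\V_1$. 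Tracking exactly when that in-arc can be avoided, and showing it cannot be avoided simultaneously across all of $\V_1$ without destroying an equality in the count, is the delicate part, and I would carry it out by a careful double count of $e(\V_1,\V_1)$ against $\sum_{u\in\V_1}d^-(u)$.
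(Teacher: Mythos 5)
Your plan hinges on the claim that, at the candidate values of $\Delta^+$ you must exclude, equality is forced in the tail-count $e(D)=e(\V_1,\V)+e(\V_2,\V)\le n+(n-\Delta^+)\Delta^+$, so that $\V_1\rightarrow\V$ and $d^+(u)=\Delta^+$ for all $u\in\V_2$, after which Lemma~\ref{le9} gives $N^+(u)=\V_1$ and the indegree comparison $\Delta^-\ge n-\Delta^+$ finishes. That claim is false at exactly the two values that matter. For $n$ odd and $\Delta^+=\frac{n-1}{2}$ the bound equals $\frac{n^2+4n-1}{4}$ while $ex(n)\ge\frac{n^2+4n-5}{4}$, and for $n$ even and $\Delta^+=\frac{n}{2}$ the bound equals $\frac{n^2+4n}{4}$ while $ex(n)\ge\frac{n^2+4n-4}{4}$; in both cases one arc of slack remains, so one of the constituent inequalities may fail by one, the hypotheses of Lemma~\ref{le9} are not available, and the rigid structure on which everything rests is not forced. (Only $\Delta^+=\frac{n}{2}-1$, $n$ even, gives genuine equality, and that is the one subcase your argument does dispose of.) Your derivation of $\Delta^+\ge\frac{n}{2}$ is therefore circular: it presupposes a structure that the count only forces once the small values of $\Delta^+$ are already excluded. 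Moreover you explicitly defer the even case $\Delta^+=\frac{n}{2}$ as ``the delicate part,'' so even granting the structure the proof is incomplete.

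The repair is to let the hypothesis $\Delta^+\ge\Delta^-$ act inside the count rather than only at the end. The paper decomposes by heads instead of tails: $e(D)=e(\V,\V_1)+e(\V,\V_2)$, where $e(\V,\V_1)=\sum_{u\in\V_1}d^-(u)\le\Delta^+\Delta^-\le(\Delta^+)^2$ and, by Lemma~\ref{le5}, $e(\V,\V_2)\le n-1$ (each vertex other than $v$ sends at most one arc into $\V_2$, and $v$ sends none). This yields $e(D)\le(\Delta^+)^2+n-1$, which is strictly below $ex(n)$ for every $\Delta^+<\frac{n}{2}$ --- in particular it eliminates $\Delta^+=\frac{n-1}{2}$ for odd $n$, which your tail-count cannot reach. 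The residual case $n$ even, $\Delta^+=\frac{n}{2}$ then sits at equality of this sharper bound, which forces $d^-(u)=\Delta^+$ for every $u\in\V_1$; one then exhibits a vertex $u_1'\in\V_1$ (the unique non-successor in $\V_1$ of a vertex $u_1\in\V_2$ of full outdegree having a successor $u_2\in\V_2$) whose indegree is at most $\Delta^+-1$, a contradiction. Your instinct to derive the final contradiction from indegrees is sound, but it needs this equality structure, not the one you posited.
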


\begin{proof} Let $v\in \V$ such that $d^+(v)=\Delta^+$. Since $\Delta^+\ge \Delta^-$, then
\begin{equation}\label{equ1}
e(\V,\V_1)=\sum\limits_{u\in \V_1}d^-(u)\le \Delta^+\Delta^-\le (\Delta^+)^2.
\end{equation}
Applying Lemma \ref{le5} we obtain
\begin{equation}\label{equ2}
e(\V,\V_2)=\sum\limits_{u\in\V\backslash \{v\}}e(u,\V_2)\le n-1.
\end{equation} It follows that
\begin{equation}\label{equ3}
e(D)=e(\V,\V_1)+e(\V,\V_2)
\le(\Delta^+)^2+n-1.
\end{equation} From (\ref{1.1}) we obtain $e(D)<ex(n)$ when $\Delta^+<\frac{n}{2}$. Hence, $\Delta^+\ge \frac{n}{2}$. Combining with Lemma \ref{le4}, we see that $\Delta^+=\frac{n+1}{2}$ when $n$ is odd and $\Delta^+\in \{\frac{n}{2},\frac{n}{2}+1\}$ when $n$ is even.

Now we consider the case $n$ is even and $\Delta^+=\frac{n}{2}$. By (\ref{1.1}), (\ref{equ3}) and $D\in EX(n)$, we have $e(D)=\frac{n^2+4n-4}{4}$. Combining with (\ref{equ1}), (\ref{equ2}) and (\ref{equ3}), we obtain
\begin{equation}\label{eq1}
d^-(u)=\Delta^+~{\rm for~all}~u\in \V_1.
\end{equation}and
$$e(\V,\V_1)=\frac{n^2}{4},~e(\V,\V_2)=n-1.$$
By Lemma \ref{le5}, each vertex in $\V\backslash \{v\}$ has a unique successor in $\V_2$.

By Lemma \ref{le2}, we have $e(\V_1,\V)\le n$. It follows from (\ref{1.1}) that
$$e(\V_2,\V)=e(D)-e(\V_1,\V)\ge (n-\Delta^+)\Delta^+-1,$$ which implies there exists at least $\Delta^+-1$ vertices in $\V_2$ with outdegree $\Delta^+$.

Let $u_1\in \V_2\backslash \{v\}$ such that $d^+(u_1)=\Delta^+$ and $u_1$ has a successor $u_2\in \V_2$. We assert that either $u_2\nrightarrow u_1'$ or $u_1'$ has no predecessor in $\V_1$. Otherwise we have $u_2\rightarrow u_1'$ and there exists $u_1^*\in \V_1\backslash \{u_1'\}$ such that $u_1^*\rightarrow u_1'$. Then there exist two distinct 2-walks $u_1\rightarrow u_2\rightarrow u_1'$ and $u_1\rightarrow u_1^*\rightarrow u_1'$, a contradiction.
By Lemma \ref{le2}, we obtain $e(\V_1,u_1')\le 1$. It follows that
 $$d^-(u_1')=e(\V_2\backslash \{u_1,u_2\},u_1')+e(\{u_1,u_2\},u_1')+e(\V_1,u_1')\le \Delta^+-1,$$ which contradicts (\ref{eq1}). Hence, we have $\Delta^+=\frac{n}{2}+1$.

\end{proof}

\section{Proof of Theorem \ref{th1}}
Now we are ready to present the proof of Theorem \ref{th1}.
\begin{proof} Let $D=(\V,\A)\in EX(n)$. Note that $D\in EX(n)$ if and only if $D'\in EX(n)$. Without loss of generality, we assume $\Delta^+\ge \Delta^-$. Let $v\in \V$ such that $d^+(v)=\Delta^+$.  Keep in mind that given $u\in \V_2$ with outdegree $\Delta^+$, $u\in \V_4$ if and only if $u$ has exactly one successor in $\V_2$.

 By Lemma \ref{le2}, we have $$e(\V_1)\le |\V_1|~{\rm and ~}e(\V_1,\V_2)\le |\V_2|.$$
It follows that \begin{eqnarray}\label{equ3.2}
e(D)= e(\V_1)+e(\V_1,\V_2)+e(\V_2,\mathcal{V})\le |\V_1|+|\V_2|+|\V_2|\Delta^+=n+(n-\Delta^+)\Delta^+.
\end{eqnarray}We distinguish two cases according to the parity of $n$.

(1) $n$ is even. By Lemma \ref{le7}, we have $\Delta^+=\frac{n}{2}+1$. Then from (\ref{equ3.2}) we obtain
\begin{eqnarray*}
e(D)\le \frac{n^2+4n-4}{4}.
\end{eqnarray*}
It follows from (\ref{1.1}) that
\begin{equation}\label{equ4}
e(D)=ex(n)=\frac{n^2+4n-4}{4}.
\end{equation}
Now (\ref{equ3.2}) and (\ref{equ4}) lead to
\begin{equation*}
d^+(u)=\Delta^+~{\rm for~ all}~ u\in \V_2
\end{equation*}
 and
 $$e(\V_1)=|\V_1|~{\rm and}~e(\V_1,\V_2)=|\V_2|,$$ which implies
 \begin{equation}\label{e1}
\V_1\rightarrow \V_1 ~{\rm and}~\V_1\rightarrow \V_2.
\end{equation}
By Lemma \ref{le9}, we have
\begin{equation}\label{e3}
N^+(u)=\V_1 ~{\rm for~ all}~ u\in \V_2.
\end{equation}
Since $\alpha\le 1$, there exist $\frac{n}{2}-1$ vertices in $\V_1$ with exactly one successor in $\V_2$, leaving two vertices $y_1$ and $y_2$ in $\V_1$ with no successors in $\V_2$. Moreover,
  \begin{equation}\label{e2}
\V_1\backslash \{y_1,y_2\} ~{\rm matches}~ \V_2.
\end{equation}
\indent If there exists $z\in \V_1\backslash\{y_1,y_2\}$ such that $z\rightarrow y_2$, we know $z$ has a predecessor in $\V_1$. By Lemma \ref{le3}, we have $y_1\rightarrow z$ or $y_2\rightarrow z$. If the latter one holds, we obtain $z\rightarrow y_2\rightarrow z$, a contradiction with Lemma \ref{le3}. If the former one holds, we consider the predecessor of $y_1$ in $\V_1$. By Lemma \ref{le3}, we have $y_2\rightarrow y_1$. Thus, we obtain $z\rightarrow y_2\rightarrow y_1$, a contradiction with Lemma \ref{le3}. Hence, we have $y_1\rightarrow y_2$. Similarly, $y_2\rightarrow y_1$.

By (\ref{e1}) every vertex $u\in \V_1\backslash \{y_1,y_2\}$ has a unique predecessor $u_1\in \V_1$. By Lemma \ref{le3}, either $u\in N^+(y_1)\cup N^+(y_2)$ or $u_1\in N^+(y_1)\cup N^+(y_2)$. By the first part of (\ref{e1}), we have $D(\V_1)=T(y_1,y_2)$. Combining with (\ref{e3}) and (\ref{e2}), $D$ is an isomorphism of $D_1$.

(2) For $n$ is odd. By Lemma \ref{le7}, we have $\Delta^+=\frac{n+1}{2}$. It follows from (\ref{equ3.2}) that
\begin{equation}\label{equ5}ex(n)\le \frac{n^2+4n-1}{4}.
 \end{equation}Suppose equality in (\ref{equ5}) holds. From (\ref{equ3.2}), we obtain $d^+(u)=\Delta^+$ for all $u\in \V_2$ and $\V_1\rightarrow \V$. By Lemma \ref{le9}, (\ref{e3}) holds. Since $|\V_1|=|\V_2|+1$ and $\alpha\le 1$, there exists $y\in \V_1$ with no successor in $\V_2$ and
\begin{equation}\label{e4}
\V_1\backslash \{y\} ~{\rm matches}~ \V_2.
\end{equation}

Since $\V_1\rightarrow \V_1$, there must exist a cycle whose length is larger than or equal to 2. It follows that there exists a walk $t_1\rightarrow t_2\rightarrow t_3$ with $t_1\ne y$. By (\ref{e4}) $t_1$ has a successor $t_4\in \V_2$, which contradicts Lemma \ref{le3}.

Therfore, $ex(n)\le \frac{n^2+4n-5}{4}$. From (\ref{1.1}), we obtain $$e(D)=ex(n)=\frac{n^2+4n-5}{4}.$$ By Lemma \ref{le2} we have $e(\V_1,\V)\le n$, which implies that $e(\V_2,\V)\ge \frac{n^2-5}{4}$. Hence, at least $\frac{n-3}{2}$ vertices of $\V_2$ have outdegrees $\Delta^+$ and $d^+(u)\ge \frac{n-1}{2}$ for all $u\in \V_2$. Since $\alpha\le 1$, we have $$e(u,\V_1)\ge \frac{n-3}{2}~{\rm for~ all}~ u\in \V_2.$$
We will use the following claim repeatedly.

 {\it{\bf Claim 1.} Let $u\in \V_1$ with $e(u,\V_1)\ge 3$. If it has a predecessor $u^*$, then $u^*$ has no successors in $\V_2$.
 }

Otherwise, $u^*$ has a successor $u_1\in \V_2$. Combining with $e(u_1,\V_1)\ge \frac{n-3}{2}$ and $e(u,\V_1)\ge 3$, $N^+_{\V_1}(u)\cap N^+_{\V_1}(u_1)$ is not empty, which contradicts Lemma \ref{le2}.

As shown in (\ref{equ3.2}), the size of $D$ is the sum of three parts: $e(\V_1)$, $e(\V_1,\V_2)$, $e(\V_2,\mathcal{V})$. We have proven that the maximum size of $D$ is less by 1 than the sum of the maximum numbers of these three parts. Hence, we can deduce that two of three parts achieve the maximum numbers and one of them misses one from the maximum number. We present these three maximum numbers as follows.
\begin{equation}\label{e13}
e(\V_1,\V_1)=\Delta^+,
\end{equation}
\begin{equation}\label{e14}
e(\V_1,\V_2)=n-\Delta^+,
\end{equation}
and
\begin{equation}\label{e15}
 d^+(u)=\Delta^+ {\rm ~for~ all~} u\in\V_2.
\end{equation}
Moreover, (\ref{e13}) is is equivalent to
\begin{equation}\label{e16}
\V_1\rightarrow \V_1;
\end{equation}
and (\ref{e14}) is is equivalent to
\begin{equation}\label{e17}
\V_1\rightarrow \V_2.
\end{equation}
According to the above analysis, we distinguish three cases:

{\it Case 1.} (\ref{e13}) and (\ref{e14}) holds. Then $\frac{n-3}{2}$ vertices of $\V_2$ have outdegrees $\frac{n+1}{2}$ and a vertex $w\in \V_2$ has outdegree $\frac{n-1}{2}$. By Lemma \ref{le5}, we have $\alpha\le 1$. Then there exist $|\V_2|$ vertices of $\V_1$ having exactly one successor in $\V_2$, leaving only one vertex $y\in \V_1$ having no successor in $\V_2$. It follows that
\begin{equation}\label{e22}
\V_1\backslash \{y\} {\rm ~matches~} \V_2.
\end{equation}

Now we distinguish three cases.

{\it Subcase 1.1.} $D(\V_2)$ contains no arc. Then
\begin{equation}\label{e23}
N^+(u)=\V_1~{\rm for~all}~  u\in \V_2\backslash \{w\}~{\rm and~} N^+(w)\subset \V_1.
 \end{equation}
\indent We assert that if there exists $t_1\rightarrow t_2\rightarrow t_3$ in $D(\V_1)$, then either $t_1=y$ or $t_1\rightarrow w$. Otherwise, there exists $t\in \V_2\backslash \{w\}$ such that $t_1\rightarrow t$. From (\ref{e23}), we obtain a $2$-walk $t_1\rightarrow t\rightarrow t_3$, a contradiction.

Since $\V_1\rightarrow \V_1$, $D(\V_1)$ must contain a cycle. From the above assertion, we obtain $D(\V_1)$ contains only one cycle $z\leftrightarrow y$, where $z\in \V_1$ is the predecessor of $w$. Combining with $z\rightarrow w\rightarrow \V_1\backslash \{w'\}$, we obtain $z\notin \V_1\backslash \{w'\}$. Otherwise we have $z\rightarrow w\rightarrow z$ and $z\rightarrow y\rightarrow z$, a contradiction. Thus we have $z=w'$, which implies $w'\rightarrow w$. Moreover, $y$ has only one successor in $\V_1$, i.e.,
 \begin{equation}\label{e25}
N^+_{\V_1}(y)=w'
\end{equation}
\indent We assert
\begin{equation}\label{e24}N^+_{\V_1}(w')= \V_1\backslash \{w'\}.
 \end{equation}Otherwise, there exists $t\in \V_1\backslash \{w'\}$ with $w'\nrightarrow t$. We know $t$ has a predecessor $u\in \V_1$. Since $2$-walks in $D(\V_1)$ have to emanate from $y$ or $w'$, from (\ref{e25}) we have $w'\rightarrow u\rightarrow t$. At the same time, we have $w'\rightarrow w\rightarrow t$ since $N^+(w)=\V_1\backslash \{w'\}$, a contradiction. Hence, we obtain $w'\rightarrow \V_1\backslash \{w'\}$. It follows that
 \begin{equation}\label{e26}
 D(\V_1)=S_{y}(w').
\end{equation}
\indent Since (\ref{e22}) and $w'\rightarrow w$, we have $\V_1\backslash \{y,w'\}$ matches $\V_2\backslash \{w\}$. Combining with (\ref{e23}), (\ref{e24}) and (\ref{e26}), $D$ is an isomorphism of $D_2$ with $T'(w)$ vanishing.

{\it Subcase 1.2.} There exists an arc in $D(\V_2\backslash \{w\})$. Suppose $u_1\rightarrow u_2$ with $u_1,u_2\in \V_2\backslash \{w\}$. By Lemma \ref{le6}, we obtain $u_1'=u_2'$ and $u_1'\rightarrow \V_1\backslash \{u_1'\}$. Since $\V_1\rightarrow \V_1$, $u_1'$ has a predecessor $t\in \V_1$. By Claim 1 we have $y\rightarrow u_1'$. It follows that $$D(\V_1)=S_{y}(u_1').$$ Moreover, $u_1'$ has a unique successor $s\in \V_2$.

If $s\in \V_3$, we have $u_1'\rightarrow s\rightarrow u_1'$, a contradiction with $u_1'\rightarrow y\rightarrow u_1'$. If $s$ has a successor $s_1\in \V_2$, we have $u_1'\rightarrow s\rightarrow s_1$. On the other hand, from (\ref{e22}) we obtain $u_1'\rightarrow \V_1\backslash \{u_1'\}\rightarrow s_1$, a contradiction. Hence, we have
\begin{equation}\label{e27}
s=w ~{\rm and}~ N^+(w)\subset \V_1.
 \end{equation}
\indent We assert that
 \begin{equation*}\label{e28}
u_1'=w'.
 \end{equation*} Otherwise, we have $w\rightarrow u_1'\rightarrow \V_1\backslash \{u_1'\}$. On the other hand, since $w\rightarrow \V_1\backslash \{w'\}$, then $\V_1\backslash \{w'\}\nrightarrow \V_1\backslash \{u_1'\}$. Combining with (\ref{e16}), we obtain $w'\rightarrow \V_1\backslash \{u_1'\}$. Recalling $u_1'\rightarrow \V_1\backslash \{u_1'\}$, we obtain a contradiction with Lemma \ref{le2}. Then we have
 \begin{equation}\label{e29}D(\V_1)=S_{y}(w').
\end{equation}
\indent We assert that
\begin{equation}\label{e30}N^+_{\V_1}(u)=\V_1\backslash \{w'\}~ {\rm for ~all}~ u\in \V_4.
\end{equation}Otherwise, there exists a vertex $u\in \V_4\backslash \{w\}$ such that $u\rightarrow w'$. By Claim 1, we have $u$ has no successor in $\V_2$, which contradicts $u\in \V_4\backslash \{w\}$.

We assert that if there exists a 2-walk $u_3\rightarrow u_4\rightarrow u_5$ in $D(\V_2)$, then $u_5=w$. Otherwise, $u_5\ne w$. We have $u_3\ne w$ from (\ref{e27}). By (\ref{e22}) and $u_1'\rightarrow w$, we obtain $u_3\rightarrow \V_1\backslash \{u_1'\}\rightarrow u_5$, a contradiction. Therefore, for any $(t_1,t_2)$ in $D(\V_2)$, we have either $t_2=w$ or $t_2\in N^-(w)$. It follows that $D(\V_2)=T'(w)+aC_1$ with $a\ge 1$. Combining with (\ref{e22}), (\ref{e27}), (\ref{e29}) and (\ref{e30}), $D$ is an isomorphism of $D_2$.

Notice that there exists $(u_1, u_2)$ in $D(\V_2\backslash \{w\})$. There must exist a 2-walk in $D(\V_2)$.  Thus, in this case, $T'(w)$ must contain a 2-walk.

{\it Subcase 1.3.} $D(\V_2\backslash \{w\})$ contains no arc but $D(\V_2)$ contains arcs. We distinguish two cases.

{\it Subcase 1.3.1.} $N^+(w)\subset \V_1$. Then $\V_4\backslash \{w\}$ is not empty and
\begin{equation*}\label{e31}u\rightarrow w ~{\rm for~ all}~ u\in \V_4\backslash \{w\},
 \end{equation*}which leads to
 \begin{equation}\label{e34}
D(\V_2)=S'(w)+aC_1,
\end{equation}
where $a\ge 1$. By Lemma \ref{le6}, for any $u\in \V_4\backslash \{w\}$ we have $\V_1\backslash \{u'\}\nrightarrow \V_1\backslash \{w'\}$. Since $\V_1\rightarrow \V_1$, we have $u'\rightarrow \V_1\backslash \{w'\}$, which implies that $u'\notin \V_1\backslash \{w'\}$. It follows that
 \begin{equation}\label{eq3}u'=w' ~{\rm for~all}~u\in \V_4.
  \end{equation}Moreover, $w'\rightarrow \V_1\backslash \{w'\}$. From (\ref{e16}) $w'$ has a predecessor in $\V_1$. By Claim 1, we have $y\rightarrow w'$. Hence, we obtain
 \begin{equation}\label{e32}D(\V_1)=S_{y}(w').
 \end{equation}
\indent Let $w_1$ be the successor of $w'$ in $\V_2$. If $w_1$ has a successor $w_2\in \V_2$. Then by (\ref{e22}) we have $w'\rightarrow w_1\rightarrow w_2$ and $w'\rightarrow \V_1\backslash \{w'\}\rightarrow w_2$, a contradiction. If $w_1\in \V_3$, then $w_1\rightarrow w'$. Hence, we obtain $w'\rightarrow w_1\rightarrow w'$ and $w'\rightarrow y\rightarrow w'$, a contradiction. Thus, we get $w_1=w$, i.e., $w$ is the successor of $w'$ in $\V_2$. It follows from (\ref{e22})that
\begin{equation}\label{e33}\V_1\backslash \{y,w'\} ~{\rm matches}~ \V_2\backslash \{w\} ~{\rm and}~ w'\rightarrow w.
\end{equation} Combining with (\ref{e34}), (\ref{eq3}), (\ref{e32}) and (\ref{e33}), $D$ is an isomorphism of $D_{2}$, where $T'(w)$ is $S'(w)$. Note that we could consider $S'(w)$ as a special case of $T'(w)$.

{\it Subcase 1.3.2.} $w$ has a successor $w_0\in \V_2$. Let $\{w_1,w_2\}=\V_1\backslash N^+_{\V_1}(w)$. Suppose $N^+(w_0)=\V_1$. By Lemma \ref{le6}, we have $N^+_{\V_1}(w)\nrightarrow \V_1$. From (\ref{e16}), we have \begin{equation}\label{eq4}\{w_1,w_2\}\rightarrow \V_1.
 \end{equation}Since $D$ is loopless, we get
 \begin{equation}\label{eq5}w_1\leftrightarrow w_2.
  \end{equation}By (\ref{e22}) at least one of $\{w_1,w_2\}$ has a successor in $\V_2$. Without loss of generality, we assume $w_1$ has a successor $w_3\in \V_2$. If $N^+(w_3)=\V_1$, we have $w_1\rightarrow w_2\rightarrow w_1$ and $w_1\rightarrow w_3\rightarrow w_1$, a contradiction. Thus, $w_3\in \V_4$.

If $w_3\ne w$, since $w_1\rightarrow w_2\rightarrow N^+_{\V_1}(w_2)$ and $w_1\rightarrow w_3\rightarrow \V_1\backslash \{w_3'\}$, we have $N^+_{\V_1}(w_2)=w_3'$. From (\ref{eq4}) and (\ref{eq5}), we obtain
\begin{equation}\label{e35}
w_1=w_3'~{\rm and}~ D(\V_1)=S_{w_2}(w_1).
 \end{equation}
Since $w_3\in \V_4\backslash \{w\}$, $w_3$ has a successor $w_4\in \V_2$. Hence, from (\ref{e22}) and (\ref{e35}) we have $w_1\rightarrow w_3\rightarrow w_4$ and $w_1\rightarrow \V_1\backslash \{w_1\}\rightarrow w_4$, a contradiction. Therefore, we obtain $w_3=w$, i.e., \begin{equation}\label{e36}
w_1\rightarrow w.
\end{equation}
\indent Since $w_1\rightarrow w\rightarrow \V_1\backslash \{w_1,w_2\}$ and $w_1\rightarrow w_2$, we obtain $w_2\nrightarrow \V_1\backslash \{w_1,w_2\}$. Combining with $\{w_1,w_2\}\rightarrow \V_1$ and $w_2\nrightarrow w_2$, we have $w_1\rightarrow \V_1\backslash \{w_1\}$. It follows that
\begin{equation}\label{e37}D(\V_1)=S_{w_2}(w_1).
 \end{equation}
By (\ref{e22}) and (\ref{e36}), we have $\V_1\backslash \{w_1\}\rightarrow w_0$, which implies $w_1\rightarrow \V_1\backslash \{w_1\}\rightarrow w_0$. On the other hand, we have $w_1\rightarrow w\rightarrow w_0$, a contradiction.

Hence, $N^+(w_0)\ne \V_1$ and $w_0$ has a successor in $\V_2$. If this vertex is not $w$, then $D(\V_2\backslash \{w\})$ contains an arc, which contradicts the given condition of Subcase 3.3. Hence, we have
\begin{equation}\label{e38}w_0\rightarrow w.
 \end{equation}
 By Lemma \ref{le6}, we have $\V_1\backslash \{w_0'\}\nrightarrow \V_1\backslash\{w_1,w_2\}$. Since $\V_1\rightarrow \V_1$, we have $w_0'\rightarrow \V_1\backslash\{w_1,w_2\}$. Since $D$ is loopless, we have $w_0'\in \{w_1,w_2\}$. Without loss of generality, we suppose $w_0'=w_1$. It follows that
 \begin{equation}\label{eq6}w_0\rightarrow \V_1\backslash \{w_1\}.
 \end{equation} and
 \begin{equation}\label{e39}
 w_1\rightarrow \V_1\backslash\{w_1,w_2\}.
  \end{equation}By $w_0\rightarrow w\rightarrow w_0$ and(\ref{eq6}), we have $\V_1\backslash \{w_1\}\nrightarrow w_0$. Combining with (\ref{e22}), we obtain $$w_1\rightarrow w_0.$$ Similarly, since $w\rightarrow w_0\rightarrow w$ and $w\rightarrow \V_1\backslash \{w_1,w_2\}$, we have $\{w_1,w_2\}\rightarrow w.$ Hence, \begin{equation*}w_2\rightarrow w.\end{equation*}
\indent Applying Lemma \ref{le6} on $w\rightarrow w_0$, we have$$\V_1\backslash \{w_1,w_2\}\nrightarrow \V_1\backslash \{w_1\}.$$ Hence, by (\ref{e39}) and (\ref{e22}), we have $w_1\rightarrow \V_1\backslash \{w_1\}$. Now we have $w_1\rightarrow w_2\rightarrow w$ and $w_1\rightarrow w_0\rightarrow w$, a contradiction.

{\it Case 2.} (\ref{e14}) and (\ref{e15}) hold. Then
\begin{equation}\label{e6}
e(\V_1)=\Delta^+-1.
\end{equation}
(\ref{e6}) implies that there exists a vertex $x\in \V_1$, which has no predecessor in $\V_1$, such that
\begin{equation}\label{eq2}\V_1\rightarrow \V_1\backslash \{x\}
 \end{equation}
Note that $|\V_1|=|\V_2|+1$. By Lemma \ref{le5}, there exists a unique vertex $y\in \V_1$ such that $e(y,\V_2)=0$ and
\begin{equation}\label{e7}\V_1\backslash \{y\}~{\rm matches}~ \V_2.
\end{equation}

Now we distinguish three cases.

{\it Subcase 2.1.} $\V_4$ is empty, which implies that
\begin{equation}\label{e8}N^+(u)=\V_1 ~{\rm ~for~ all~} u\in \V_2.
\end{equation}
Suppose $y\ne x$. Then $y$ has a predecessor in $\V_1$. By Lemma \ref{le3}, $y$ has no successor in $\V_1$. It follows that there exists no 2-walks in $D(\V_1)$. Recalling (\ref{eq2}), among the vertices of $\V_1$ only $x$ has successors in $\V_1$, which implies $D(\V_1)=S(x)$ where $x\in \V_1\backslash \{y\}$. Moreover, by (\ref{e7}) $x$ has a successor in $\V_2$. Combining with (\ref{e7}) and (\ref{e8}), $D$ is an isomorphism of a special case of $D_4$, where $\V_3=\V_2$.

Suppose $y=x$. By Lemma \ref{le3}, all 2-walks originate at $y$. For any $(u_1,u_2)$ in $D(\V_1)$, either $u_1=y$ or $u_1\in N^+(y)$. From (\ref{eq2}) we have $D(\V_1)=T(y)$. Combing with (\ref{e7}) and (\ref{e8}), $D$ is an isomorphism of $D_3$.

{\it Subcase 2.2.} There exists an arc $(u_1, u_2)$ in $D(\V_2)$ with $N^+(u_2)=\V_1$. By Lemma \ref{le6}, we have $N^+_{\V_1}(u_1)\nrightarrow \V_1$. By Lemma \ref{le5}, we have $e(u_1,\V_2)= 1$ and $e(u_1,\V_1)=\frac{n-1}{2}$. Hence, we have $N^+_{\V_1}(u_1)= \V_1\backslash \{u_1'\}$. By (\ref{eq2}), we obtain $u_1'\rightarrow \V_1\backslash \{x\}$. Since $D$ is loopless, we have $u_1'=x$ and
\begin{equation}\label{e11}D(\V_1)=S(x).
\end{equation}
By Lemma \ref{le5} and (\ref{e15}), we obtain
\begin{equation*}e(u,\V_1)\ge \Delta^+-1 {\rm ~for~ all~} u\in \V_2.
\end{equation*}
\indent We assert that \begin{equation}\label{e9}
 N^+_{\V_1}(u)=\V_1\backslash \{x\} ~{\rm ~for~ all}~ u\in \V_4.
  \end{equation}
  Otherwise, there exists an arc $(t_1, t_2)$ in $D(\V_2)$ with $t_1\rightarrow x$. Then $e(x,\V_1)=\frac{n-1}{2}$ and $e(t_2,\V_1)\ge\frac{n-1}{2}$ contradict Lemma \ref{le2}.

If $x\ne y$, then $x$ has a successor $x_1\in \V_2$. We assert that \begin{equation}\label{e10}
N^+(x_1)=\V_1.
  \end{equation} Otherwise, $x_1$ has a successor $x_2\in \V_2$ and $x\rightarrow x_1\rightarrow x_2$. On the other hand, by (\ref{e7}) and $x\rightarrow x_1$ we have $x\rightarrow \V_1\backslash \{x\}\rightarrow x_2$. We get two 2-walks from $x$ to $x_2$, a contradiction.

We also assert that if there exists $u_3\rightarrow u_4\rightarrow u_5$ in $D(\V_2)$, then $u_5=x_1$. Otherwise, from (\ref{e9}) we have $u_3\rightarrow \V_1\backslash \{x\}$. By (\ref{e7}) and $x\rightarrow x_1$, we have $u_3\rightarrow \V_1\backslash \{x\}\rightarrow u_5$, a contradiction.

It follows that all 2-walks in $D(\V_2)$ share the same terminal vertex $x_1$. Hence for any arc $t_1\rightarrow t_2$ in $D(\V_2)$, if $N^+(t_2)\ne \V_1$, then $x_1$ is the successor of $t_2$ in $\V_2$. Therefore, $$D(\V_2)=T'(x_1)+\sum\limits_{i=1}^kS'(u_i)+aC_1,$$ with $\V_3\ne \V_2$. Combining with (\ref{e7}), (\ref{e11}) and (\ref{e9}), $D$ is an isomorphism of $D_4$ with $\V_3\ne \V_2$.

Now suppose $x=y$. Then $x$ has no successor in $\V_2$. Let us turn back to consider $u_1\rightarrow u_2$ with $u_1,u_2\in \V_2$. From (\ref{e9}) we obtain $N^+_{\V_1}(u_1)=\V_1\backslash \{x\}$. Then $\V_2(u_1)=\{x\}\cup \V_2\backslash \{u_2\}$. Moreover, in $\V_2(u_1)$ there exist $\frac{n-3}{2}$ vertices with outdegree $\Delta^+$ and one vertex with outdegree $\Delta^+-1$. Replace the role of $v$ by $u_1$ and apply the same arguments as in Case 1, $D$ is an isomorphism of $D_2$.

{\it Subcase 2.3.} $\V_4$ is a proper subset of $\V_2$ and we have $u_2\in \V_4$ for every arc $(u_1,u_2)$ in $D(\V_2)$. Suppose $t_1\rightarrow t_2$ with $t_1,t_2\in \V_2$ and $e(t_2,\V_1)=\Delta^+-1$. Then $t_2$ has a successor $t_3\in \V_2$. Similarly, $t_3$ has a successor $t_4\in \V_2$. Since $t_1\rightarrow t_2\rightarrow t_3$ and $t_1\rightarrow \V_1\backslash \{t_1'\}$, then $\V_1\backslash \{t_1'\}\nrightarrow t_3$. Combining with (\ref{e7}), we have $t_1'\rightarrow t_3$. Similarly, $t_2'\rightarrow t_4$. From (\ref{e7}) we obtain $t_1'\ne t_2'$. Since $t_1\rightarrow t_2\rightarrow \V_1\backslash \{t_2'\}$ and $t_1\rightarrow \V_1\backslash \{t_1'\}$, we have $\V_1\backslash \{t_1'\}\nrightarrow \V_1\backslash \{t_2'\}$. From (\ref{eq2}), we obtain $t_1'\rightarrow \V_1\backslash \{x,t_2'\}$, which implies $e(t_1',\V_1)\ge \Delta^+-2$. Similarly, $e(t_2',\V_1)\ge \Delta^+-2$. Hence there exists $t\in \V_1$ such that $t_i'\rightarrow t$ for $i=1,2$, which contradicts Lemma \ref{le2}.

{\it Case 3.} (\ref{e13}) and (\ref{e15}) holds, and
\begin{equation}\label{e12}
 e(\V_1,\V_2)=n-\Delta^+-1.
\end{equation}
By Lemma \ref{le5}, there exist $|\V_2|-1$ vertices of $\V_1$ having exactly one successor in $\V_2$. Applying Lemma \ref{le2}, there exists one vertex $x\in \V_2$ with no predecessor in $\V_1$. Note that $|\V_1|=|\V_2|+1$. There exist $y_1,y_2\in \V_1$ such that $e(y_i,\V_2)=0$ for $i=1,2$ and
\begin{equation}\label{e18}
\V_1\backslash \{y_1,y_2\}{\rm~ matches~} \V_2\backslash \{x\}.
\end{equation}

Now we distinguish two cases.

{\it Subcase 3.1.} $\V_4$ is empty. Then (\ref{e8}) holds.
From (\ref{e16}), there exist cycles in $D(\V_1)$. Assume $t_1\in \V_1$ is in a cycle of $D(\V_1)$. Then there exists $t_1\rightarrow t_2\rightarrow t_3$ in $D(\V_1)$. Lemma \ref{le3} guarantees that only $y_1,y_2$ could be in a cycle, which implies there is only one cycle $y_1\leftrightarrow y_2$ in $D(\V_1)$. (\ref{e16}) implies that each vertex in $\V_1$ has a unique predecessor in $\V_1$. By Lemma \ref{le3}, $\V_1\subset N^+(y_1)\cup N^+(y_2)\cup N^+(N^+(y_1)\cup N^+(y_2))$. Hence, we have $D(\V_1)=T(y_1,y_2)$. Combining with (\ref{e8}) and (\ref{e18}), $D$ is an isomorphism of $D_5$.

{\it Subcase 3.2.} $\V_4$ is a proper subset of $\V_2$. Then there exists an arc $u_1\rightarrow u_2$ in $D(\V_2)$. By Lemma \ref{le6}, we have  $u_1'\rightarrow \V_1\backslash \{u_1'\}$.

We assert that
\begin{equation}\label{e20}u\rightarrow \V_1\backslash \{u_1'\} ~{\rm ~for~ all~} u\in \V_4.
\end{equation} Otherwise, there exists $u\in \V_4$ such that $u'\ne u_1'$. Then $u$ has a successor $u^*\in \V_2$. By Lemma \ref{le6}, we obtain $u'\rightarrow \V_1\backslash \{u'\}$, a contradiction with Lemma \ref{le2}.

We assert if there exists $t_1\rightarrow t_2\rightarrow t_3$ in $D(\V_2)$, then either $u_1'\rightarrow t_3$ or $t_3=x$. Otherwise, $t_3$ has a predecessor $t_4\in \V_1\backslash \{u_1'\}$. Since $t_1\in \V_4$ and (\ref{e20}), we have $t_1\rightarrow t_4\rightarrow t_3$, a contradiction.

Let us turn back to consider $u_1\rightarrow u_2$ with $u_1,u_2\in \V_2$. Now by Lemma \ref{le6}, we have $u_2\in \V_4$, which means $u_2$ has a successor in $\V_2$. Applying Lemma \ref{le6} repeatedly, we see that $D(\V_2)$ must contain a cycle $C_s$. Since $u_1'$ has a unique successor $z\in \V_2$, according to the above assertion $D(\V_2)$ contains only one 2-cycle $z\leftrightarrow x$. Moreover, for any arc $(t_1,t_2)$ in $D(\V_2)$, either $t_1\in N^-(x)\cup N^-(z)$ or $t_2\in N^-(x)\cup N^-(z)$. By Lemma \ref{le5}, we have
\begin{equation}\label{e21}
D(\V_2)=T'(x,z)+aC_1,
\end{equation}where $a\ge 1$.

From (\ref{e16}), $u_1'$ has a unique predecessor $u_1^*\in \V_1$. By Claim 1 we have $u_1^*\in \{y_1,y_2\}$. Without loss of generality, we let $y_1\rightarrow u_1'$. Note that $x\in \V_4$ as $x\rightarrow z$. From (\ref{e20}) we get $x'=u_1'$. Hence, $D(\V_1)=S_{y_1}(x')$. Combing with (\ref{e18}), (\ref{e20}) and (\ref{e21}), $D$ is an isomorphism of $D_6$.

So far we have proved that if $e(D)=ex(n)$, then $D$ is an isomorphism of $D_i$ with $i\in \{1,\ldots,6\}$. On the other hand, by Lemma \ref{le8}, $D_i$ is $ \mathscr{F}$-free for $i\in \{1,\ldots,6\}$. Hence, we get the second part of Theorem \ref{th1}.

This completes the proof.

\end{proof}
\section*{Acknowledgement}

Partial of this work was done when Huang was visiting Georgia Institute of Technology and Lyu was visiting Auburn University with the financial support of China Scholarship Council. They thank China Scholarship Council,  Georgia Tech and Auburn University for their support. The second author also thanks Professor Tin-Yau Tam for helpful discussions on matrix theory during his visit.

\end{document}